\date{\today}
\newcommand{\indic}[1]{\mathbf{1}_{#1}}
\newcommand{\indica}[1]{\mathbf{1}_{\{#1\}}}
\newtheorem{theorem}{Theorem}
\newtheorem{conj}{Conjecture}
\newtheorem{coro}[theorem]{Corollary}
\newtheorem{proposition}[theorem]{Proposition}
\theoremstyle{definition}
\newtheorem{remark}[conj]{Remark}
\newcommand{\eps}{\varepsilon}
\newcommand{\E}{\mathbb{E}}
\renewcommand{\P}{\mathbb{P}}
\newcommand{\R}{\mathbb{R}}
\newcommand{\bck}{\!\!\!}
\newcommand{\bx}{{\bf x}}
\newcommand{\FF}{\mathcal{F}}
\newcommand{\La}{\Lambda}
\newcommand{\X}{\Xi}
\newcommand{\iy}{\infty}
\newcommand{\N} {\mathbb{N}}
\newcommand{\sumd}{\sum_{i=1}^\iy x_i^2}
\begin{document}

\author{Vlada Limic\thanks{Supported in part by ANR MANEGE grant.}\\
CNRS UMR 6632, Universit\'e de Provence
}

\title{Genealogies of regular exchangeable coalescents with applications to sampling}

\maketitle

\begin{abstract}
\noindent
This article considers a model of genealogy corresponding to a regular exchangeable coalescent (also known as $\X$-coalescent) started from a large finite configuration, and undergoing neutral mutations. 
Asymptotic expressions for the number of active lineages were obtained by the author in a previous work.
Analogous results for the number of active mutation-free lineages and the
combined lineage lengths are derived using the same martingale-based technique.
They are given in terms of convergence in probability, while extensions to convergence in moments and convergence 
almost surely are discussed.
The above mentioned results have direct consequences on the sampling theory in the $\X$-coalescent setting. 
In particular, the regular $\X$-coalescents that come down from infinity (i.e., with locally finite genealogies)
have an asymptotically equal number of families under the corresponding infinite alleles and infinite sites models.
In special cases, quantitative asymptotic formulae 
for the number of families that contain a fixed number of individuals can be given. 
\end{abstract}

\noindent {\em AMS 2000 Subject Classification.}
60J25, 60F99, 92D25

\noindent {\em Key words and phrases.}
Exchangeable coalescents, $\Xi$-coalescent, $\La$-coalescent, regularity, 
sampling formula, small-time asymptotics, coming down from infinity, martingale technique, random mutation rate

\noindent {\em Running Head:}
Genealogies of regular exchangeable coalescents

\clearpage

\section{Introduction}
Kingman's coalescent \cite{king82,king82b} is one of the 
central models of mathematical population genetics.
From the theoretical perspective, its importance is 
linked to the duality with 
the Fisher-Wright diffusion (and more generally with the Fleming-Viot process).
Therefore the Kingman coalescent emerges in the scaling limit of genealogies
of all evolutionary models that are asymptotically linked to Fisher-Wright
diffusions.
From the practical perspective, its elementary nature allows for exact
computations and fast simulation, making it amenable to statistical analysis.

Assume that the original sample has $n$ individuals, labeled $\{1,2,\ldots,n\}$.
The {\em genealogy} is a process in which ancestral lineages 
coalesce in continuous time.
One can identify the original sample with the trivial partition $\{\{1\},\ldots,\{n\}\}$,
and moreover, at any positive time, one can identify each of the active ancestral
lineages with a unique equivalence class of $\{1,2,\ldots,n\}$ that consists of the labels of all the individuals 
that descend from this lineage.
In this way, each coalescent event of two (or more) ancestral lineages can be perceived as 
the merging of the corresponding equivalence classes.
Ignoring the partition structure information, 
one can now view the coalescent as a {\em block} (rather than equivalence class) merging
process.

Kingman's coalescent corresponds to the dynamics where
each pair of blocks coalesces at rate 1.
In particular, while there are $n$ blocks present in the configuration, 
the total number of blocks decreases by $1$ at rate ${n\choose 2}$.
In 1972 Ewens \cite{ewens_sam} derived a sampling formula that holds for several neutral population evolution models, and in particular for the Kingman coalescent. Its importance is well indicated by almost 900
scientific citations in less than four decades\footnote{source: ISI
  Web of Knowledge {\tt http://apps.isiknowledge.com/}}.
The Ewens sampling formula will be recalled 
in Section \ref{S:discuss}. 

The fact that in the Kingman coalescent dynamics only pairs of blocks
can merge at any given time
makes it less suitable to 
model evolution of marine populations or
viral populations under strong selection.
In fact, it is believed (and argued to be observed in experiments, see e.g.~\cite{marine}) that 
in such settings the reproduction mechanism allows for a proportion of 
the population to have the same parent (i.e., first generation ancestor).
This translates to having multiple
collisions of
the ancestral lineages in the corresponding coalescent mechanism.

A family of mathematical models with the above property was independently introduced and
studied by Pitman \cite{pit99} and Sagitov \cite{sag99} (see also the remark on page 195 of Donnelly and Kurtz \cite{donkur99})
under the name {\em $\Lambda$-coalescents} or
{\em coalescents with multiple collisions}.
Almost immediately emerged 
an even more general class of models, named {\em $\Xi$-coalescents} or 
{\em coalescents with simultaneous multiple collisions} or {\em exchangeable coalescents}.
The $\X$-coalescent processes were 
initially studied by  M\"{o}hle and Sagitov \cite{moehle sagitov}, and
introduced by Schweinsberg \cite{schweinsberg_xi} in their full generality.
In particular, it is shown in \cite{moehle sagitov} that any limit
of genealogies arising from a population genetics model with (time-homogeneous)  exchangeable reproduction
mechanism must be a $\Xi$-coalescent.
For additional pointers to the recent coalescent literature see \cite{berest brasil,bertoin book}.

The present article can be considered as a sequel to Berestycki et al.~\cite{bbl1} and Limic \cite{me_xi}.
It demonstrates once again the power of martingale techniques 
in the study of exchangeable coalescents.
In fact, the main result is proved by a variation of the technique from
\cite{bbl1,me_xi}.
The primary interest of the current work however is 
its potential for applications in studies of sampling statistics.

Unlike \cite{bbl1,me_xi,bbl2}, the present work does not focus primarily on 
the coalescents that ``come down from infinity''. 
Indeed, the starting configuration is finite in the current context, so as long
as a certain regularity condition (see \cite{me_xi} or (\ref{ERcond}) at the beginning of Section \ref{S:results}) holds, 
the small-time asymptotic results derived here (in Theorem \ref{TMinit} and Proposition \ref{PN}) apply.
All the $\La$-coalescents (as well as certain general exchangeable coalescents of 
particular interest in mathematical 
population genetics \cite{dursch,schdur}) are regular in this sense
(see also Remark 12 in \cite{me_xi}).
If a regular $\X$-coalescent has a locally finite genealogy (or equivalently, if its standard version comes
down from infinity),
its small-time asymptotics determines the asymptotic growth of the corresponding combined lineage length,
and in turn, the asymptotic 
number of families in the infinite alleles (resp.~sites) model (see Theorem \ref{Tfull limit}).
It is worth pointing out that Proposition \ref{PN} is the first example of a
``meta-theorem'' that applies in the more general context of regular exchangeable coalescents that have (potentially random but) uniformly bounded rate (per unit lineage length) of mutation, recombination and/or migration (see also Remark \ref{Rrobust}).
Some consequences on the asymptotic frequency spectrum are discussed in Section \ref{S:discuss}.

This paper inherits the basic notation from \cite{me_xi}.
For the benefit of the reader we recall it here.
The set of real numbers is denoted by $\R$ and $(0,\iy)$ by $\R_+$.
For $a,b\in \R$, denote by $a \wedge b$ (resp.~$a \vee b$) the minimum (resp.~maximum)
of the two numbers.
Let
\begin{equation}
\label{EDel}
\Delta := \{(x_1,x_2,\ldots,): x_1\geq x_2 \geq \ldots \geq 0,\, \sum_i x_i \leq 1\},
\end{equation}
be the infinite unit simplex.
For $\bx=(x_1,x_2,\ldots) \in \Delta$ and $c\in \R$, let 
\[
c\, \bx = (c x_1,c x_2,\ldots).
\]
The notation $\log$ is reserved for the {\em natural} logarithm, that is, the inverse of $\R \ni x \mapsto e^x \in (0,\iy)$. 
If $f$ is a function, defined in a left-neighborhood $(s-\eps,s)$ of a point $s$,
 denote by $f(s-)$ the left limit of $f$ at $s$.
Given two functions $f,g:\R_+\to \R_+$, write
$f=O(g)$ if $\limsup f(x)/g(x) <\infty$,
$f=o(g)$ if $\limsup f(x)/g(x) =0$,
 and $f\sim g$ if $\lim f(x)/g(x) =1$.
Furthermore, write $f=\Theta(g)$ if both $f=O(g)$ and $g=O(f)$.
The point at which the limits are taken
is determined from the context.
If $\FF=(\FF_t,t\ge 0)$ is a filtration, and $T$ a stopping time
relative to $\FF$, denote by $\FF_T$ the standard filtration generated by $T$,
see for example \cite{durrett}, page 389.

The rest of the paper is organized as follows: 
the model 
of interest is described in Section \ref{S:prelim}, 
in Section \ref{S:results} the main results are stated, followed by a discussion of some consequences,
while the arguments are postponed until Section \ref{S:martingale}.

\section{Definitions and preliminaries}
\label{S:prelim}

\subsection{The model}
\label{S:model}
For the purposes of this note, the precise definition  of 
exchangeable coalescent processes (and their standard infinite version) 
is not essential. An interested reader can consult any of the
references \cite{schweinsberg_xi,berest brasil,bertoin book,me_xi} for details of the
construction, and further properties.

Instead, we next construct a related ``genealogical'' process which, together with a 
suitable enrichment, will be in the focus of the present study.
Suppose that $\X$ is a probability measure on $\Delta$.
Assume that we are given 
 a Poisson Point Process on $\R_+ \times \Delta$
\begin{equation}
\label{DPPPpi}
\pi(\cdot) = \sum_{k \in \N } \delta_{t_k,\bx_k}(\cdot),
\end{equation}
with intensity measure $dt\, \otimes\, \X'(d\bx)/\sumd$, 
where 
$\X(d\bx)=(1-\X'(\Delta)) \delta_{(0,0,\ldots)}(d\bx)$ $+ \X'(d\bx)$
and $\X'((0,0,\ldots))=0$, $\X'(\Delta)\in (0,1]$ (the trivial case $\X'(\Delta)=0$ corresponds to the Kingman coalescent).

Let $n$ be a finite (typically large) integer.
The {\em $n$-genealogy} (associated to $\Xi$) evolves as follows:
at the initial time $0$, there are $n$ branches present in the system, each having trivial length $0$. 
As time increases, the length of each branch increases at unit rate.
An atom  $(t,\bx)$ of $\pi$ influences the evolution as follows:
at time $t$ let each
branch present in the system at time $t-$ choose an i.i.d.~color from $\N \cup (0,1)$, independently of the past 
evolution, according to the common law
\[
\label{Dcoloring}
P_\bx(\{i\})=x_i, \ i\geq 1 \ \mbox{ and } P_\bx(du)=(1-\sum_{i=1}^\iy x_i)\,du,\ u\in (0,1).
\]
For each $j\geq 1$, all the branches of color $j$ collapse immediately (at time $t$)
 into a single branch. 
In addition, each pair of branches coalesces at rate $1-\X'(\Delta)$ independently of the above color and collapse procedure.
The readers might notice that whenever $\int_\Delta \X'(d\bx)/\sumd=\infty$, 
the time-projection of the set of atoms of $\pi$ is dense on any interval 
of positive length. 
This could a priori cause difficulties in the construction,
however, the above branch growing and collapsing process is always well-defined, 
as can be directly verified (or check \cite{me_xi,schweinsberg_xi} for a similar construction of exchangeable coalescents).
Furthermore, it is important to note that the just described construction can be coupled over $n \in \N$
in such a way that 
for any two $m,n$, where $m>n$, restricting the process started from $m$ distinct branches onto the first 
$n$ branches
gives precisely the process started from $n$ branches.
We call this coupling the {\em full genealogy} or the {\em full $\Xi$-genealogy}.

\begin{remark}
\label{R:scale}
Setting $\X(\Delta)=1$ is convenient but arbitrary.
It is easily seen that the time scale of any exchangeable coalescent can be multiplied 
by a constant factor, 
so that the law of the resulting process matches that of an exchangeable coalescent corresponding
to a driving measure 
$\X$ of total mass $1$.
\hfill $\Diamond$
\end{remark}

One can enrich the above construction as follows: let $\upsilon$ be a completely 
independent Poisson Point Process of marks that
arrive at rate $\gamma$ per unit length. 
This means that while there are $m$ branches present in the
system, the next mark (mutation) arrives at rate $m \gamma$, and when it arrives it is placed uniformly 
at random (and independently from everything else) onto one of the branches.
The parameter $\gamma$ is usually called the {\em mutation rate}. 
In this enriched construction,
each branch is in one of the two states, {\em open} or {\em closed}, at any given time. 
Initially all the branches are open.
A branch becomes closed starting from the moment a mark arrives to it, additional marks that 
possibly arrive afterward onto it do not change its state.
Immediately after two or more branches collapse into one, the state of this newly formed branch is
determined as follows: if at least one of the contributing branches is open, the new branch is open, otherwise
it is closed.
It is clear that one can couple this enriched genealogy
again over $n\in \N$, and we call the resulting process the {\em full
  marked genealogy}. 

Let $\FF=(\FF_t,\,t\geq 0)$ be the filtration generated by the full
marked genealogy. 
Denote by $N\equiv N^n$ the branch counting process, and by $N^o\equiv N^{n;o}$ (resp.~$N^c\equiv N^{n;c}$)
the open (resp.~closed) branch counting process.
Note that, for each $n$, all the processes $N^n,N^{n;o}, N^{n;c}$ are $\FF$-adapted.
It is clear that
\[
N(t)= N^o(t) + N^c(t), \ t\geq 0.
\]
Moreover, note that both $N$ and $N^o$, unlike $N^c$, are monotone (decreasing) processes.
In addition we have that, almost surely, for each $t\geq 0$ and $n\geq 1$, 
\[ 
N^n(t) \leq N^{n+1}(t)
\ \mbox{ and } \
N^{n;o}(t) \leq N^{n+1;o}(t),
\]
but it is not necessarily true that $N^{n;c}(t) \leq N^{n+1;c}(t)$ (on Figure 1, $N^{1;c}(t_1)>N^{2;c}(t_1)$). 
If for each $t>0$ the family $(N^n(t),n\geq 1)$ is tight, or equivalently if 
$\lim_n N^n(t)$ exists and is a finite random variable, we will say that
the full $\Xi$-genealogy is {\em locally finite}.
This property is equivalent to the above mentioned coming down from infinity property 
for the (standard) $\Xi$-coalescent.

   \psset{linewidth=0.4pt}
   \psset{xunit= 1.05cm,yunit= 0.7cm}
   \pspicture(-3.5,-1.5)(6,9)

\psline(0,0)(1.5,0)
\psline(0,1)(1.5,1)
\psline(0,2)(1.5,2)
\psline[linewidth=0.8pt,linestyle=dashed,dash=2pt 2pt](1.5,0)(1.5,2)
\psline(0,3)(2.5,3)
\psline(0,4)(2.5,4)
\psline[linewidth=0.8pt,linestyle=dashed,dash=2pt 2pt](2.5,3)(2.5,4)
\psline(0,5)(1.5,5)
\psline(0,6)(1.5,6)
\psline(0,7)(1.5,7)
\psline(0,8)(1.5,8)
\psline[linewidth=0.8pt,linestyle=dashed,dash=2pt 2pt](1.5,5)(1.5,8)
\psline(1.5,1)(4,1)
\psline(2.5,3.5)(4,3.5)
\psline(1.5,6.5)(4,6.5)
\psline[linewidth=0.8pt,linestyle=dashed,dash=2pt 2pt](4,1)(4,6.5)
\psline(4,4)(6,4)
\pscircle*(0.6,0){0.09}
\pscircle*(0.4,2){0.09}
\pscircle(1.2,2){0.09}
\pscircle*(2.1,3){0.09}
\pscircle*(1,6){0.09}
\pscircle*(0.3,7){0.09}
\pscircle*(2.3,6.5){0.09}
\pscircle(3.2,6.5){0.09}
\pscircle(3.7,6.5){0.09}
\pscircle*(5,4){0.09}
\psset{linewidth=1.5pt}
\psline(0,0)(0.6,0)
\psline(0,1)(1.5,1)
\psline(0,2)(0.4,2)
\psline(0,3)(2.1,3)
\psline(0,4)(2.5,4)
\psline(0,5)(1.5,5)
\psline(0,6)(1,6)
\psline(0,7)(0.3,7)
\psline(0,8)(1.5,8)
\psline(1.5,1)(4,1)
\psline(2.5,3.5)(4,3.5)
\psline(1.5,6.5)(2.3,6.5)
\psline(4,4)(5,4)
\rput[r](-0.3,0){\small $1$}
\rput[r](-0.3,1){\small $2$}
\rput[r](-0.3,2){\small $3$}
\rput[r](-0.3,3){\small $4$}
\rput[r](-0.3,4){\small $5$}
\rput[r](-0.3,5){\small $6$}
\rput[r](-0.3,6){\small $7$}
\rput[r](-0.3,7){\small $8$}
\rput[r](-0.3,8){\small $9$}
\rput[t](6,-0.7){\small time}
\rput[t](0,-0.7){\small $0$}
\rput[t](1.5,-0.7){\small $t_1$}
\rput[t](2.5,-0.7){\small $t_2$}
\rput[t](4,-0.7){\small $t_3$}
\endpspicture

\noindent
{\small Figure 1: A realization with $n=9$ particles labeled by $\{1,\ldots,9\}$, and coalescent events 
occurring at times 
$t_1, t_2$ and $t_3$. Open branches are indicated as thicker lines,
and any mutation that falls onto an open (resp.~closed) line is depicted as filled 
(resp.~unfilled) circle.}
\setcounter{figure}{1}

\vspace{0.2cm}
Call a mutation (or mark) that arrives onto an open (resp.~closed) branch {\em open} (resp.~{\em closed}).
Furthermore, denote by $M\equiv M^n$ the mutation counting process,
 and by $M^o\equiv M^{n;o}$ (resp.~$M^c\equiv M^{n;c}$)
the open (resp.~closed) mutation counting process.

Clearly,
\[
M(t)= M^o(t) + M^c(t), \ t\geq 0.
\]
For the realization in Figure 1, $N(t_1) =4=N^o(t_1)$, while $M(t_1)=5$ and $M^o(t_1)=4$.
We also have $N^c(t_1-)=4$, while $N^c(t_1)=0$ and $N^c(t_2-)=2$.

Let
\begin{equation}
\label{Etau one n}
\tau^n\equiv \tau_1^n := \inf \{ t \ge 0 : N^{n} (t) =1\},
\end{equation}
be the time of collapse to a single lineage,
and 
\[
\tau_*^n := \inf \{ t \ge \tau^n : \Delta \upsilon(t) >0 \},
\]
be the arrival time of the first mutation to this unique lineage.

In \cite{bbl2} it was noted that $M(\tau_*^n) =M(\tau^n)+1$  can be interpreted as the 
total number of ``families'' in the corresponding infinite sites model. For the realization
on Figure 1 we have $\tau^n=t_3$, and the ``families''
are $\{1\},\{3\}, \{3\}^*, \{4\}, \{7\}, \{8\},\{6,7,$ $8,9\},$ $\{6,7,8,9\}^*,$
$\{6,7,8,9\}^{**}$, $\{1,2,3,4,5,6,7,8,9\}$, where the superscripts indicate the difference of the two families
even though their contents, as subsets of $\N$, are identical.

Note in addition that $M^o(\tau_*^n)$ can similarly be interpreted as the total number 
of families in the infinite alleles model. 
For the realization on Figure 1, the (infinite alleles) families 
are $\{1\},\{2,5\}, \{3\},\{4\}, \{6,9\}, \{7\}, \{8\}$.

\section{Main results}
\label{S:results}
Suppose that the driving measure $\X$ satisfies
\begin{equation}
\label{ERcond}
\tag{R}
\int_\Delta \frac{(\sum_{i=1}^\iy x_i)^2}{\sumd} \,\X(d\bx) < \iy.
\end{equation}
This regularity condition already appeared in \cite{me_xi}. As observed in the introduction, all $\La$-coalescents 
(the Kingman coalescent included) are regular. 
\begin{remark}
In view of these observations, the ratio $(\sum_i x_i)^2/\sumd$ should be interpreted as $0$ at $(0,0,\ldots)$, so that the regularity of any $\X$-coalescent is determined by its ``non-Kingman'' part $\X'(d\bx)=\X(d\bx)\indic{\Delta \setminus \{(0,0,\ldots)\}}$. 

Presently, (\ref{ERcond}) seems necessary for the martingale analysis 
below to work, although certain modifications of the technique might lead to stronger results, see Remark 22 in \cite{me_xi}. 
We refer the reader to Section 3.2 in \cite{me_xi} for examples of non-regular coalescents.
\hfill $\Diamond$
\end{remark}
In analogy to (\ref{Etau one n}) define
\begin{equation}
\label{Etau b n}
\tau_b^n := \inf \{ t \ge 0 : N^{n} (t) \leq b\}, \  \tau_b^{n;o} := \inf \{ t \ge 0 : N^{n;o} (t) \leq b\}.  
\end{equation}
It is easy to see that (cf.~\cite{bbl1,me_xi})
$\tau_b^n \nearrow \tau_b$, almost surely, where 
\begin{equation}
\label{Etau fin}
\P(\tau_b \in (0,\iy])=1
\end{equation}
(note that 
$\tau_b$ takes value $\iy$ precisely in the cases where the genealogy is not locally finite).
Similarly, one can show via the
the full (marked) genealogy coupling that $\tau_b^{n;o}  \nearrow \tau_b^o$, where
\begin{equation}
\label{Etau o fin}
\P(\tau_b^o \in (0,\iy])=1.
\end{equation}
The fact that $\tau_b^{n;o}$ is non-decreasing in $n$ is a direct consequence of the coupling.
To see that the limit cannot take value $0$, note that
$\{N^{n;o}(s) > b\}=\{\tau_b^{n;o} > s\}$, and that
for each $s\geq 0$, $b\in \N$,
\[
\P(N^{n;o}(s) > b)\geq \P(N^n(s) >b, \mbox{the first $b+1$ branches stay open during }[0,s]),
\]
where the right hand side is non-decreasing in $n$, and where 
its limit $p(s,b)$ (monotone decreasing in both variables) satisfies
$\lim_{s\to 0} p(s,b)=1$, $b\geq 1$.

Let $\alpha \in (0,1/2)$ be arbitrary but fixed.
The main results of this article are given next, starting with the most general ones.
\begin{theorem}
\label{TMinit}
Under (\ref{ERcond}), there exists $n_0\in \N$ such that for each 
$t>0$ and $\beta \in (0,\alpha\wedge 1-2\alpha)$
\begin{eqnarray}
\label{EMinit one}
\sup_{n\in \N} E\left(\frac{M^{n;c}(t\wedge \tau_{n_0}^{n;o})}{\gamma\int_0^{t\wedge \tau_{n_0}^{n;o}} N^n(u)\,du}\right) = O( t^\alpha + t^{1-2\alpha}),\\ 
\label{EMinit two}
\limsup_n \P\left(\frac{M^{n;c}(t \wedge \tau_{n_0}^{n;o})}{M^n(t \wedge \tau_{n_0}^{n;o})} \geq  t^\beta \right) =O( t^{\alpha-\beta} \wedge t^{1-2\alpha-\beta}). 
\end{eqnarray}
\end{theorem}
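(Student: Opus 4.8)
The plan is to exploit that $M^{n}$, $M^{n;o}$ and $M^{n;c}$ are counting processes whose $\FF$-predictable intensities are $\gamma N^n(u)$, $\gamma N^{n;o}(u)$ and $\gamma N^{n;c}(u)$ respectively (a mark lands on a closed branch with instantaneous probability $N^{n;c}/N^n$, so closed marks arrive at rate $\gamma N^{n;c}$), so that the denominator $\gamma\int_0^{t\wedge\sigma}N^n(u)\,du$, writing $\sigma:=\tau_{n_0}^{n;o}$, is precisely the compensator of $M^n$. In particular $\widetilde M^{n;c}(t):=M^{n;c}(t)-\gamma\int_0^{t}N^{n;c}(u)\,du$ is an $\FF$-martingale with predictable quadratic variation $\langle \widetilde M^{n;c}\rangle_t=\gamma\int_0^t N^{n;c}(u)\,du$, whence $E[(\widetilde M^{n;c}(t\wedge\sigma))^2]=E[M^{n;c}(t\wedge\sigma)]$. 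The combinatorial heart of the argument is the pathwise inequality
\begin{equation}
\label{Epot}
N^{n;c}(u)\ \le\ M^{n;o}(u)\ \le\ M^n(u),\qquad u\ge 0,
\end{equation}
which I would prove by checking that the potential $N^{n;c}-M^{n;o}$ starts at $0$ and is nonincreasing: it is unchanged at every mutation (an open mutation raises both terms by one, a closed mutation changes neither) and it decreases at every collapse that involves at least one closed branch, since merging destroys closed branches without producing open mutations.

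For (\ref{EMinit one}) I would decompose $M^{n;c}=\widetilde M^{n;c}+\gamma\int_0^{\cdot}N^{n;c}$ and divide by $B:=\gamma\int_0^{t\wedge\sigma}N^n(u)\,du$. For the predictable piece, (\ref{Epot}) and $N^{n;c}(u)\le M^{n;o}(t\wedge\sigma)$ for $u\le t\wedge\sigma$ give the pathwise reduction
\[
\frac{\gamma\int_0^{t\wedge\sigma}N^{n;c}(u)\,du}{B}\ \le\ \frac{(t\wedge\sigma)\,M^{n;o}(t\wedge\sigma)}{\int_0^{t\wedge\sigma}N^n(u)\,du}\ \le\ t\,\frac{M^{n;o}(t\wedge\sigma)}{\int_0^{t\wedge\sigma}N^n(u)\,du},
\]
so I am left to bound the two ratio-expectations $E[t\,M^{n;o}(t\wedge\sigma)/\!\int N^n]$ and $E[|\widetilde M^{n;c}(t\wedge\sigma)|/B]$, both having the random denominator $B$. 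The genuine difficulty is this \emph{expectation of a ratio}: $B$ may be small, so one cannot pass to the quotient of expectations (where $E[M^{n;o}(t\wedge\sigma)]\le E[M^n(t\wedge\sigma)]=E[B]$ would immediately give $O(t)$). I would resolve it by splitting on $\{B\ge\theta\}$ versus $\{B<\theta\}$ with $\theta=\theta(t)$ a power of $t$: on $\{B\ge\theta\}$ the martingale term is bounded by $|\widetilde M^{n;c}(t\wedge\sigma)|/\theta$, and Cauchy--Schwarz with $E[(\widetilde M^{n;c})^2]=E[M^{n;c}]$ and the $O(t)$ mean bound turns it into a fluctuation contribution; on $\{B<\theta\}$ one uses the $n$-uniform concentration of $N^n$ around its deterministic profile $v_\psi$ from \cite{me_xi} (valid under (R)) to show the event is rare. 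Balancing the rare-event contribution against the fluctuation contribution is exactly the bias--variance trade-off of \cite{bbl1,me_xi}; with the free exponent $\alpha$ encoding $\theta=\theta(t)$, the optimization yields the two stated terms $t^\alpha$ (rare event/bias) and $t^{1-2\alpha}$ (fluctuation).

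For (\ref{EMinit two}) I would reduce to (\ref{EMinit one}) by replacing the random denominator $M^n$ by its compensator $B$. On the good event $\{M^n(t\wedge\sigma)\ge c\,B\}$ one has $M^{n;c}/M^n\le c^{-1}M^{n;c}/B$, so Markov's inequality and (\ref{EMinit one}) give
\[
\P\Big(\tfrac{M^{n;c}(t\wedge\sigma)}{M^n(t\wedge\sigma)}\ge t^\beta,\ M^n(t\wedge\sigma)\ge c\,B\Big)\ \le\ \frac{c^{-1}}{t^\beta}\,E\!\left[\frac{M^{n;c}(t\wedge\sigma)}{\gamma\int_0^{t\wedge\sigma}N^n}\right]\ =\ O\!\big(t^{\alpha-\beta}+t^{1-2\alpha-\beta}\big).
\]
The complementary event $\{M^n(t\wedge\sigma)<c\,B\}$ is a lower deviation of a counting process below its compensator, handled by a standard exponential (Chebyshev) martingale estimate, again $n$-uniformly via the control of $N^n$, and contributes a term of smaller order. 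Retaining, for each $t$, the sharper of the direct second-moment estimate and the bias estimate accounts for the minimum $t^{\alpha-\beta}\wedge t^{1-2\alpha-\beta}$ in the statement, while the condition $\beta<\alpha\wedge(1-2\alpha)$ is exactly what keeps both exponents positive, so that the bound vanishes as $t\to0$.

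The step I expect to be the main obstacle is the control of the expectation of the ratio in (\ref{EMinit one}): making the threshold split rigorous requires a quantitative, $n$-uniform lower-tail bound for the compensator $B=\gamma\int_0^{t\wedge\sigma}N^n$, which in turn rests on the small-time asymptotics of the active-lineage count $N^n$ established in \cite{me_xi} under the regularity hypothesis (R). Everything else is bookkeeping around the clean pathwise inequality (\ref{Epot}) and the martingale/compensator identities.
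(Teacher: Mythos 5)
Your reduction of (\ref{EMinit two}) to (\ref{EMinit one}) --- Markov's inequality on the good event plus a lower-deviation bound for $M^n$ against its compensator on the bad one --- is essentially the paper's own strategy (the paper routes the lower tail through $\tau_{n_0}^{n}$ rather than $\tau_{n_0}^{n;o}$ because $M^n(t\wedge\tau_{n_0}^{n;o})$ is \emph{not} conditionally Poisson given the branch-count paths, the stopping time itself carrying mutation information; your exponential supermartingale stopped at $t\wedge\tau_{n_0}^{n;o}$ would sidestep this, provided you also prove that $\int_0^{t\wedge\tau_{n_0}^{n;o}}N^n(u)\,du$ diverges with $n$, which is a genuine step in the paper and not just ``a term of smaller order''). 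Your pathwise inequality $N^{n;c}(u)\le M^{n;o}(u)$ is also correct and rather elegant.

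The genuine gap is in (\ref{EMinit one}). First, the ``expectation of a ratio'' problem you single out as the main obstacle does not exist: writing $\sigma=\tau_{n_0}^{n;o}$, the denominator $B=\gamma\int_0^{t\wedge\sigma}N^n(u)\,du$ is measurable with respect to $\sigma\{(N^n(u),N^{n;o}(u)),\,u\le t\wedge\sigma\}$, and conditionally on these paths $M^{n;c}(t\wedge\sigma)$ is exactly Poisson with mean $\gamma\int_0^{t\wedge\sigma}N^{n;c}(u)\,du$; hence $E[M^{n;c}(t\wedge\sigma)/B]=E\bigl[\gamma\int_0^{t\wedge\sigma}N^{n;c}\,/\,B\bigr]$ with no martingale remainder and no small-denominator event to excise. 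Second, and more seriously, your scheme never actually produces the stated exponents. Their source is Proposition \ref{PN}: on the event $A_t=\{N^{n;c}(u)\le 8t^\alpha N^n(u),\ \forall u\le t\wedge\sigma\}$, whose probability is $1-O(t^{1-2\alpha})$ uniformly in $n$, the compensator ratio is bounded pathwise by $8t^\alpha$, and on $A_t^c$ one uses the trivial bound $N^{n;c}\le N^n$; this gives (\ref{EMinit one}) in two lines. You never use this comparison of $N^{n;o}$ with $N^n$, and instead assert that a bias--variance optimization over a threshold $\theta(t)$ ``yields the two stated terms'' without computing anything. Moreover the Cauchy--Schwarz step is not uniform in $n$ for a threshold depending only on $t$: $E[(\widetilde{M}^{n;c}(t\wedge\sigma))^2]=E[M^{n;c}(t\wedge\sigma)]=\gamma E[\int_0^{t\wedge\sigma}N^{n;c}(u)\,du]$ is unbounded in $n$ (it is comparable to $\ell_t(n)$ up to $t$-dependent factors), so $\theta^{-1}E[|\widetilde{M}^{n;c}(t\wedge\sigma)|]$ blows up unless $\theta$ grows with $n$, at which point the split and the ``optimization'' must be re-engineered from scratch; and even your compensator term leaves behind $E[M^{n;o}(t\wedge\sigma)/\int_0^{t\wedge\sigma}N^n]$, a ratio of exactly the type you set out to avoid. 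As written, the quantitative heart of (\ref{EMinit one}) --- the uniform-in-$n$ bound $O(t^\alpha+t^{1-2\alpha})$ --- is missing.
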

\begin{remark}
Since the estimates are non-trivial only for $t\leq 1$, the optimal choice of $\alpha$ is $1/3$.
\hfill $\Diamond$
\end{remark}
This theorem is a consequence of the next result.
\begin{proposition}
\label{PN}
Under (\ref{ERcond}), there exists $n_0\in \N$ such that for each $s>0$
\begin{equation}
\label{Ebehav N}
P\left(\sup_{t\in[0,s]} \left| 
1-\frac{N^{n;o}(t\wedge \tau_{n_0}^{n;o})}{N^n(t \wedge \tau_{n_0}^{n;o})} \right|
> 8s^{\alpha}
\right) = O(s^{1-2\alpha}), \mbox{ uniformly over } n\in \N.
\end{equation}
\end{proposition}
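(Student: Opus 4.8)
The plan is to observe first that, since $N^{n;o}\le N^n$, the quantity inside the supremum equals $N^{n;c}(t\wedge\sigma)/N^n(t\wedge\sigma)$, where I abbreviate $\sigma:=\tau_{n_0}^{n;o}$; thus it suffices to show that the \emph{closed fraction} stays below $8s^\alpha$ throughout $[0,s\wedge\sigma]$ with probability $1-O(s^{1-2\alpha})$. The conceptual heart is a pointwise estimate. At any fixed time $t$ every block present is joined to each of its constituent leaves by an ancestral path of length \emph{exactly} $t$, so, conditionally on the (mark-free) genealogy $\mathcal{T}$, a block is open with probability at least $e^{-\gamma t}$: it is enough that the path of one fixed leaf carry no mark. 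Hence each block is closed with conditional probability at most $1-e^{-\gamma t}\le\gamma t$, and summing over the $N^n(t)$ blocks shows that, conditionally on $\mathcal{T}$, the mean closed fraction $N^{n;c}(t)/N^n(t)$ is at most $1-e^{-\gamma t}\le\gamma t\le\gamma s$. Since the target threshold $8s^\alpha$ dwarfs this typical value (because $\alpha<1/2<1$), there is ample room, and the work lies entirely in upgrading this pointwise control to a bound that is uniform in $t$ and in $n$.

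For the supremum I would pass to the $\FF$-semimartingale decomposition of the closed-branch counter. Reading off the jump rates, $N^{n;c}$ increases by $1$ at rate $\gamma N^{n;o}(u)$ (a mark hitting an open branch) and can only decrease at coalescent times, when closed branches are merged away; hence
\[
N^{n;c}(t)=\gamma\int_0^t N^{n;o}(u)\,du-\int_0^t\kappa(u)\,du+Z(t),
\]
with $\kappa\ge 0$ the coalescent removal rate of closed branches and $Z$ a mean-zero martingale. The decisive point is that $\kappa$ must be retained: discarding it and bounding $N^{n;c}$ by $\gamma\int_0^t N^{n;o}\,du$ is hopeless, since that integral grows like $\log n$. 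I would therefore estimate $\kappa$ from below. Under \eqref{ERcond} a given lineage merges into another at a rate comparable to the coalescent speed per block, which on $[0,s]$ is bounded below by a quantity of order $1/u$; this yields $\kappa(u)\ge\rho(u)\,N^{n;c}(u)$ with $\rho(u)$ large enough that the net drift $\gamma N^{n;o}(u)-\kappa(u)$ turns negative as soon as $N^{n;c}(u)$ exceeds a fixed multiple of $\gamma s^\alpha N^n(u)$. Above that threshold the closed counter behaves like a supermartingale, which confines it.

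To turn this confinement into the stated estimate I would decompose $[0,s\wedge\sigma]$ into the successive time blocks on which $N^n$ lies between consecutive dyadic levels $2^{j}$ and $2^{j+1}$, and apply a maximal ($L^2$/Doob) inequality to $Z$ on each block. On the block at level $j$ the branch count is $\approx 2^{j}$, the block has duration $\approx 2^{-j}$, the expected number of marks is $O(\gamma)$, and the accumulated quadratic variation of $Z$ is $O(\gamma)$ as well; under \eqref{ERcond} the simultaneous-multiple-merger jumps of $N^{n;c}$ have controlled second moments, so this holds for general $\X$-coalescents and not only for $\La$-coalescents. The probability that $N^{n;c}$ exceeds $8s^\alpha 2^{j}$ on the block is then $O(\gamma/(s^\alpha 2^{j})^2)$ by Chebyshev, and summing this geometric series over $j$ down to the coarsest relevant level (where $2^{j}\approx N^n(s\wedge\sigma)\gtrsim n_0$) produces a bound of order $s^{1-2\alpha}$ that is independent of $n$, precisely because the contributions from the finer scales decay geometrically. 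The constant $n_0$ is chosen large enough that the removal-rate lower bound and the martingale estimates hold on every block encountered before $\sigma$; the stopping at $\sigma$, being an $\FF$-stopping time, is handled by working throughout with the stopped processes.

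The main obstacle I anticipate is the removal-rate estimate together with the uniform-in-$n$ summation: for an exchangeable coalescent with simultaneous multiple collisions, quantifying $\kappa$ — and, in tandem, the quadratic variation and jump sizes of $Z$ — strongly enough to beat the $\log n$-sized creation drift and yet summably across all dyadic scales requires careful use of \eqref{ERcond}. This is exactly where regularity enters, as it guarantees that the relevant per-block rates and jump moments are finite and comparable to those governing the coalescent speed. Controlling the mutation-dependent stopping time $\sigma$ and the smallest scales near $n_0$ is a secondary but necessary technical point.
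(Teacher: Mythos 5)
Your reduction to bounding the closed fraction $N^{n;c}/N^n$ and your pointwise observation (each block at time $t$ is open with conditional probability at least $e^{-\gamma t}$, since it suffices that the ancestral path of one of its leaves, of total length $t$, carry no mark) are both correct, and the confinement strategy is a legitimate idea. But the quantitative skeleton of your argument is calibrated to the Kingman coalescent, and several of its key steps fail in the generality the proposition actually claims, which includes regular $\X$-coalescents that do \emph{not} come down from infinity. Concretely: (i) the per-block coalescence rate is of order $\psi(b)/b$, and your lower bound ``of order $1/u$'' needs both $N^n(u)\gtrsim v^n(u)$ (the speed-of-coming-down lower bound, which you never invoke) and $\psi(v^n(u))/v^n(u)\gtrsim 1/u$; the latter is false for, say, $\X=\delta_{(x,0,\ldots)}$, where $\psi(q)\sim q/x$ and the per-block rate is the constant $1/x$. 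In that example there is essentially no removal on $[0,s]$, and the correct bound comes from the estimate you dismiss as hopeless, $N^{n;c}(t)\le\gamma\int_0^tN^{n;o}(u)\,du\approx\gamma t N^n(t)$; a proof in the stated generality must interpolate between this regime and the removal-dominated one. (ii) The dyadic bookkeeping (duration $\approx 2^{-j}$ at level $2^{j}$, hence $O(\gamma)$ marks and $O(\gamma)$ quadratic variation per block) is specific to $\psi(q)\asymp q^2$; for $\psi(q)\asymp q^\beta$ with $\beta<2$ the expected number of marks at level $j$ grows like $2^{j(2-\beta)}$. (iii) Most seriously, the martingale $Z$ carries the compensated jumps of $N^{n;c}$ at simultaneous multiple mergers, which can be comparable to $N^{n;c}$ itself; controlling $\langle Z\rangle$ is precisely where (\ref{ERcond}) must enter and is the hard part of any such proof, yet your sketch attributes the quadratic variation entirely to the mutation marks.

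The paper's route avoids all of this. By the drift computation (\ref{EdlogNo}), $\log N^{n;o}$ has the same infinitesimal drift as $\log N^n$ up to the additional bounded term $\gamma b\log\frac{b-1}{b}=-\gamma+o(1)$, and the quadratic-variation bound (\ref{EvarN}) carries over; hence the deviation estimate (\ref{Esuffices n}) from \cite{me_xi} applies to $N^{n;o}$ with the \emph{same} deterministic centering $v^n$, and combining $|\log(N^n/v^n)|\le 2s^\alpha$ with $|\log(N^{n;o}/v^n)|\le 2s^\alpha$ yields $|1-N^{n;o}/N^n|\le 8s^\alpha$ off an event of probability $O(s^{1-2\alpha})$. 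Any repair of your argument would in any case require importing $N^n\approx v^n$ from \cite{me_xi}, at which point the comparison through $v^n$ is both shorter and uniform over all regular $\X$.
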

\begin{remark}
Note that it is not true in general that $\lim_{n\to \iy} M^{n;o}(t)/M^{n}(t) = 1$ for a fixed 
$t>0$. For example, in the case where
$\Xi$ is a Dirac measure  $\delta_{(x,0,\ldots)}$ for some $x\in (0,1)$, all the original $n$ branches remain in the 
genealogy for an exponential (rate $1$) amount of time (regardless of $n$), therefore $N^n(t) \sim n$ and 
$N^{n;o}(t) \sim n e^{-\gamma t}$ at $t\approx 0$ (uniformly in $n$), implying
$M^n(t) \sim nt $ and 
$M^{n;o}(t) \sim (1- e^{-\gamma t})/\gamma n$.
In this case, $\lim_{n\to \iy} M^{n;o}(t)/M^{n,t} = f(t)$ where $\lim_{t\to 0} f(t)=1$.

All the $\X$-coalescents without proper frequencies (cf.~M\"ohle \cite{moehle_wofreq})
are regular, but they do not have locally finite genealogy.
In \cite{moehle_wofreq} it is shown that the appropriately scaled total lineage length of coalescents
without proper frequencies
converges in distribution to a non-trivial random variable.
\hfill $\Diamond$
\end{remark}
However, if the underlying full genealogy is locally finite,
one has $M^{n;o}(t)\sim M^{n}(t)$ for a fixed time $t$.
This result is stated next in a slightly different form, 
that may be more interesting from the perspective of applications.
Define
\begin{equation}
\label{D:psi xi}
\psi(q)\equiv \psi_{\X}(q):= \int_{\Delta}\frac{\sum_{i=1}^\iy(e^{-qx_i}-1+qx_i)}{\sumd}\,\X(d\bx).
\end{equation}
Note that the above integral converges since 
$e^{-z}-1+z\leq z^2/2$, $z\geq 0$,
and, in particular, 
\begin{equation}
\label{Epsi ineq}
\psi_\X(q)\leq q^2/2.
\end{equation}
Define $t \mapsto v^n(t)\in \R_+$ by
\begin{equation}
\label{Evn}
\int_{v^n(t)}^n \frac{dq}{\psi(q)}=t,
\end{equation}
and let
\[
\ell(n):=\int_1^n \frac{q}{\psi(q)}\,dq.
\]
\begin{theorem}
\label{Tfull limit}
If the full genealogy is locally finite and (\ref{ERcond}) holds, then
\begin{equation}
\label{Efull limit}
\lim_{n\to \infty} \frac{M^n(\tau^n)}{M^{n;o}(\tau^n)} 
= \lim_{n\to \infty} \frac{M^n(\tau^n)}{\gamma \cdot \ell(n)} 
= \lim_{n\to \infty} \frac{M^{n;o}(\tau^n)}{\gamma \cdot \ell(n)} 
= 1, \ \mbox{in probability.}
\end{equation}
\end{theorem}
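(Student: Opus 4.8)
The three limits in (\ref{Efull limit}) are not independent: since $M^n=M^{n;o}+M^{n;c}$, once the two normalizations by $\gamma\,\ell(n)$ are established, the ratio $M^n(\tau^n)/M^{n;o}(\tau^n)\to 1$ follows automatically. So the plan is to prove the middle and right limits, and the natural bridge is the \emph{combined lineage length} $L^n(t):=\int_0^t N^n(u)\,du$. Two elementary facts will be used throughout. First, by (\ref{Epsi ineq}) one has $\psi(q)\le q^2/2$, hence $q/\psi(q)\ge 2/q$ and $\ell(n)\ge 2\log n\to\iy$, so the normalization diverges. Second, conditionally on the coalescence events alone---which are independent of the mutation process and already determine $\tau^n$, $N^n$ and $L^n$---the total mutation process has rate $\gamma N^n(t)$, so $M^n(\tau^n)$ is Poisson with conditional mean $\gamma L^n(\tau^n)$.

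First I would show $L^n(\tau^n)/\ell(n)\to 1$ in probability. The deterministic engine is the substitution $q=v^n(u)$, $dq=-\psi(q)\,du$ in (\ref{Evn}), giving $\int_0^t v^n(u)\,du=\int_{v^n(t)}^n \frac{q}{\psi(q)}\,dq$; running $v^n$ down to $1$ produces exactly $\ell(n)$. Invoking the small-time asymptotics for the lineage count from \cite{me_xi} in its integrated form ($\int_0^t N^n\sim\int_0^t v^n$ for small $t$), I get, for each fixed small $\eps$, $\int_0^\eps N^n(u)\,du\sim\ell(n)-\int_1^{v^n(\eps)}\frac{q}{\psi(q)}\,dq$. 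Local finiteness forces $v^n(\eps)\to v^\iy(\eps)<\iy$, so the subtracted integral is $O(1)$ and, dividing by $\ell(n)\to\iy$, one finds $\big(\int_0^\eps N^n(u)\,du\big)/\ell(n)\to 1$. The leftover $\int_\eps^{\tau^n} N^n(u)\,du$ is $O(1)$: by local finiteness $N^n(\eps)$ and $\tau^n$ converge to finite limits and $N^n$ is monotone, so this tail is tight and negligible against $\ell(n)$. Hence $L^n(\tau^n)/\ell(n)\to 1$.

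Poisson concentration upgrades this to the mutation count: since $\gamma L^n(\tau^n)\to\iy$ in probability and, conditionally on the coalescent, $M^n(\tau^n)$ is Poisson with that mean, a conditional second-moment (Chebyshev) estimate---the Poisson variance equalling its mean---applied on $\{\gamma L^n(\tau^n)\ge R\}$ and letting $R\to\iy$ gives $M^n(\tau^n)/(\gamma L^n(\tau^n))\to 1$, whence $M^n(\tau^n)/(\gamma\,\ell(n))\to 1$, the middle limit. For the right limit it suffices to prove $M^{n;c}(\tau^n)=o(\gamma\,\ell(n))$, since then $M^{n;o}=M^n-M^{n;c}=\gamma\ell(n)(1+o(1))$. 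Fix a small deterministic $\eps$. On the event $\{\tau_{n_0}^{n;o}>\eps\}$---whose probability tends to $1$ as $n\to\iy$ then $\eps\to 0$, because $\tau_{n_0}^{n;o}\nearrow\tau_{n_0}^o>0$---the stopped ratio in (\ref{EMinit one}) at $t=\eps$ is exactly $M^{n;c}(\eps)/(\gamma L^n(\eps))$, whose expectation is $O(\eps^\alpha+\eps^{1-2\alpha})$; combined with $L^n(\eps)/\ell(n)\to 1$ and Markov's inequality, $M^{n;c}(\eps)/(\gamma\ell(n))$ is $O(\eps^\alpha+\eps^{1-2\alpha})$ in probability. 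The closed mutations accrued in $[\eps,\tau^n]$ are bounded by $M^n(\tau^n)-M^n(\eps)$, conditionally Poisson with mean $\gamma\int_\eps^{\tau^n}N^n(u)\,du=O(1)$, hence $o(\ell(n))$. Letting $n\to\iy$ and then $\eps\to 0$ sends both contributions to $0$ (the complementary event absorbing into a vanishing probability), giving $M^{n;c}(\tau^n)/(\gamma\ell(n))\to 0$ and completing (\ref{Efull limit}).

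The main obstacle is the transfer in the first step: passing from the \emph{pointwise} small-time lineage-count asymptotics of \cite{me_xi} to the combined length up to the \emph{random} time $\tau^n$, and in particular controlling the late regime $[\eps,\tau^n]$ in which only finitely many lineages survive. Local finiteness is precisely what makes that tail $O(1)$ and therefore negligible against the diverging $\ell(n)$; without it $\tau^n\to\iy$ and the whole scheme breaks down, in agreement with the Remark preceding the statement. Everything downstream---the Poisson law of large numbers and the use of (\ref{EMinit one}) to discard closed mutations---is comparatively routine once the length asymptotics are secured.
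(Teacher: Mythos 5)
Your proposal is correct and follows essentially the same route as the paper: reduce to showing $\int_0^{\tau^n}N^n(u)\,du\sim\ell(n)$ via the change of variables $q=v^n(u)$ and the stopped small-time estimate (\ref{Esuffices n}), upgrade to $M^n(\tau^n)\sim\gamma\ell(n)$ by the conditional Poisson law of large numbers, and discard $M^{n;c}$ using Theorem \ref{TMinit} together with tightness of the late-time lineage length under local finiteness. The only (immaterial) differences are that you run a direct $n\to\iy$-then-$\eps\to0$ double limit where the paper passes to almost surely convergent subsequences via Borel--Cantelli, and you use (\ref{EMinit one}) with Markov's inequality and a cut at a deterministic time $\eps$ where the paper invokes (\ref{EMinit two}) and cuts at $\tau_{n_0}^{n;o}\wedge t$.
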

\begin{remark}
In the $\La$-coalescent setting (where (\ref{ERcond}) automatically holds) this result 
coincides with the initial part of Theorem 2 in \cite{bbl2}.
The proof of this and related results in \cite{bbl2} is based on studying the 
asymptotic behavior of the arrival time of a
(uniformly chosen at) random mutation, as $n\to \iy$.
Under additional assumptions on $\La$, this asymptotics can be quite precisely determined, leading to the convergence almost surely results in \cite{bbl2} (also discussed in 
the paragraph containing (\ref{Eas verbal}) in Section \ref{S:discuss}).
The martingale-based technique presented here is not suitable for deducing precise information about the randomly chosen mutation.
However, it is more compact than the technique from \cite{bbl2} (that still relies on martingale estimates from \cite{bbl1}),
provides partial information even for non-locally finite genealogies, and is better suited for finding error bounds as well as extensions 
(like Theorem \ref{Tpartial limit} or the case of random uniformly bounded mutation rates as discussed in Remark \ref{Rrobust}).
\hfill $\Diamond$
\end{remark}

Define
\[
\ell_t(n) := \int_0^t v^n(u)\,du = \int_{v^n(t)}^n \frac{q}{\psi(q)} \,dq.
\]
Similar arguments as for Theorem \ref{Tfull limit} yield the following generalization:
\begin{theorem}
\label{Tpartial limit}
If the full genealogy is locally finite and (\ref{ERcond}) holds, then
for any bounded sequence $(t_n)_{n\geq 1}$  of positive numbers such that $\ell_{t_n}(n)$ diverges
as $n\to \iy$ 
\[
\lim_{n\to \infty} \frac{M^n(t_n)}{M^{n;o}(t_n)} 
= \lim_{n\to \infty} \frac{M^n(t_n)}{\gamma \cdot \ell_{t_n}(n)} 
= \lim_{n\to \infty} \frac{M^{n;o}(t_n)}{\gamma \cdot \ell_{t_n}(n)} 
= 1, \ \mbox{in probability.}
\]
\end{theorem}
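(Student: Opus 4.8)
The plan is to mirror the proof of Theorem~\ref{Tfull limit}, replacing the random collapse time $\tau^n$ by the deterministic $t_n$ and $\ell(n)$ by $\ell_{t_n}(n)$. It is enough to establish the two convergences
\[
\frac{M^n(t_n)}{\gamma\,\ell_{t_n}(n)}\to 1 \qquad\text{and}\qquad \frac{M^{n;o}(t_n)}{M^n(t_n)}\to 1 \quad\text{in probability,}
\]
since together they give all three limits in the statement. I would prove the first by combining a probabilistic law of large numbers (relating the mutation count to the integrated lineage length) with a deterministic approximation (relating the integrated lineage length to $\ell_{t_n}(n)=\int_0^{t_n}v^n(u)\,du$), and the second directly from Theorem~\ref{TMinit}.

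For the law of large numbers, note that mutations accrue at rate $\gamma N^n$ along the $\FF$-adapted configuration, so the compensated process $\mathcal{M}^n(t):=M^n(t)-\gamma\int_0^t N^n(u)\,du$ is an $\FF$-martingale whose predictable quadratic variation coincides with its compensator, $\langle\mathcal{M}^n\rangle_t=\gamma\int_0^t N^n(u)\,du$. I would then invoke the standard law of large numbers for counting-process martingales (via Lenglart's inequality): once $\langle\mathcal{M}^n\rangle_{t_n}\to\iy$ in probability, one gets $\mathcal{M}^n(t_n)/\langle\mathcal{M}^n\rangle_{t_n}\to 0$, hence $M^n(t_n)\sim\gamma\int_0^{t_n}N^n(u)\,du$. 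This reduces the first convergence to the deterministic comparison $\int_0^{t_n}N^n(u)\,du\sim\ell_{t_n}(n)$.

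This comparison is the step I expect to be the main obstacle. I would fix a small $s>0$ and split the integral at $s\wedge t_n$. On $[0,s]$ I would use the small-time asymptotics for the number of active lineages from \cite{me_xi} (obtained in the same martingale framework as Proposition~\ref{PN}), which compare $N^n$ to the deterministic profile $v^n$ through a bound of the form $\sup_{u\le s}|N^n(u)/v^n(u)-1|=O(s^\alpha)$ with probability $1-O(s^{1-2\alpha})$; this makes $\int_0^{s\wedge t_n}N^n$ and $\int_0^{s\wedge t_n}v^n$ agree up to relative error $O(s^\alpha)$. On the remaining interval $[s,t_n]$ (empty when $t_n\le s$) I would use local finiteness: for fixed $s>0$ the monotone limit $N^n(s)\uparrow N(s)<\iy$ forces $\int_s^{t_n}N^n(u)\,du=O(1)$, while $\ell_s(n)=\int_{v^n(s)}^n\tfrac{q}{\psi(q)}\,dq\to\iy$ because $v^n(s)$ stays bounded and $\int^\iy q/\psi(q)\,dq=\iy$ by (\ref{Epsi ineq}). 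Hence the tail on $[s,t_n]$ is negligible against $\ell_{t_n}(n)\sim\ell_s(n)$, and letting $s\to0$ after $n\to\iy$ removes the $O(s^\alpha)$ error and yields $\int_0^{t_n}N^n(u)\,du\sim\ell_{t_n}(n)$ (and in particular the divergence needed above). The delicate feature is that essentially all of the lineage length accumulates in the vanishing initial window, so the approximation must be uniform precisely there; the case $t_n\to0$ is automatically covered since then $[0,t_n]\subseteq[0,s]$ and only the $[0,s]$ analysis is used.

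Finally, for $M^{n;o}(t_n)\sim M^n(t_n)$ I would bound the closed mutations. Since $M^{n;c}$ is non-decreasing, the same split gives $M^{n;c}(t_n)\le M^{n;c}(s\wedge t_n\wedge\tau_{n_0}^{n;o})+O(1)$, where the $O(1)$ counts the at most finitely many mutations falling on the boundedly many branches present after $s\wedge\tau_{n_0}^{n;o}$ (bounded by local finiteness on the $s$-part and by Proposition~\ref{PN} at $\tau_{n_0}^{n;o}$). By (\ref{EMinit two}) the first term is at most $s^\beta M^n(t_n)$ with high probability, while $O(1)=o(M^n(t_n))$ since $M^n(t_n)\sim\gamma\ell_{t_n}(n)\to\iy$; letting $s\to0$ gives $M^{n;c}(t_n)/M^n(t_n)\to0$, that is $M^{n;o}(t_n)\sim M^n(t_n)$. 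Alternatively, one could run a second martingale law of large numbers for $M^{n;o}$ together with the comparison $\int_0^{t_n}N^{n;o}\sim\int_0^{t_n}N^n$ supplied by Proposition~\ref{PN}. Combining the two displayed convergences closes all three limits in Theorem~\ref{Tpartial limit}.
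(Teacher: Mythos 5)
Your proposal is correct and follows essentially the same route as the paper: reduce to showing $M^n(t_n)\sim\gamma\,\ell_{t_n}(n)$, compare $\int_0^{t_n}N^n(u)\,du$ with $\ell_{t_n}(n)$ by splitting at a small time $s$ (small-time asymptotics of $N^n/v^n$ on $[0,s]$, local finiteness and tightness on the tail), conclude by a law of large numbers for the conditionally Poisson mutation count, and handle $M^{n;o}$ versus $M^n$ via Theorem~\ref{TMinit}. The only differences are cosmetic choices of tools (Lenglart's inequality instead of the explicit conditional-Poisson concentration, and a fix-$s$-then-let-$s\to0$ limit in probability instead of the paper's subsequence-plus-Borel--Cantelli argument).
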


\subsection{Consequences, extensions, and further comments}
\label{S:discuss}
Theorem \ref{Tfull limit} can be restated as follows: 
for any regular $\X$-coalescent that comes down from infinity, 
the number of families in the infinite alleles and the infinite sites model
are asymptotically equal, in probability.

If more is known about $\X$, the convergence of Theorem \ref{Tfull limit} can be extended to 
convergence almost surely, using arguments very similar to those of Section 4.4.1 in \cite{bbl1}. 
Furthermore, one could prove 
that
\[
\lim_{t\to 0}\frac{N^n(t)}{v^n(t)} = 1 \mbox{ in } L^p,\ \forall p \geq 1.
\]
Theorems \ref{Tfull limit}--\ref{Tpartial limit} could then be extended to convergence in 
the mean (see Remark \ref{R:ending} at the end of the article for details).

The asymptotics (\ref{Efull limit}) holds 
(see Drmota et al.~\cite{drmotaetal} and Basdevant and Goldschmidt \cite{basgol}) also for the
Bolthausen-Sznitman coalescent, which does not have locally finite genealogy.
The author is convinced that the arguments of Theorem \ref{Tfull limit} could be extended to cover
this special setting (where $\La(dx) =\X(d(x,0,0,\ldots))= dx$, $x\in [0,1]$), and provide a shorter 
(and more generic) proof of this result (see Remark \ref{RBolSzn}).

For each fixed $r\in \N$, set
$M_r^n(t) :=\#\{$mutations that arrive before time $t$ and affect precisely $r$ individuals$\}$,
and
$M_r^{n;o}(t) := \# \{$open mutations that arrive before time $t$ and affect precisely $r$ individuals$\}$,
and 
\[
M_r^n:= M_r^n(\tau_*^n), \ M_r^{n;o}:= M_r^{n;o}(\tau_*^n).
\]
In the case of Kingman's coalescent, the Ewens
sampling formula \cite{ewens_sam} 
gives the joint law of the above family of random variables as follows:
given $a_i$, $i=1,\ldots, n$ such that $a_i\geq 0$ and $\sum_{i=1}^n i a_i =n$,
\begin{equation}
  \label{ESF}
 P(M_{i}^{n,o}=a_i,\,i=1,\ldots,n) =
  \frac{n!}{2\gamma (2\gamma+1)\cdots(2\gamma+n-1)}\prod_{i=1}^n
  \frac{(2\gamma)^{a_i}}{i^{a_i}a_i!}.
\end{equation} 
For a general exchangeable coalescent one can only hope for 
asymptotic analogues to (\ref{ESF}). 
The first such approximations were given by Berestycki et
al.~\cite{bbs1,bbs2}
for all the Beta-coalescents with locally finite genealogies.

\begin{coro}
\label{Cnewad}
Suppose that $\ell(n) \sim^* n^\beta$ for some $\beta\in (0,1)$,
where $\sim^*$ stands for $\sim$ 
up to a slowly varying multiple.
\begin{equation}
\label{Eas verbal}
\mbox{both $M_r^n$ and $M_r^{n;o}$ are asymptotic to 
$\beta \Gamma(r-\beta) \ell(n)/r!$, in probability, }
\end{equation}
where 
due to assumptions $\Gamma(z)= \int_0^\iy t^{z-1} e^{-t}\,dt$ is well defined at $r-\beta$, $r\in \N$. 
\end{coro}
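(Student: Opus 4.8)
The plan is to translate the counting of size-$r$ families into a statement about branch lengths, and then to read off the limiting frequency spectrum from the regular variation of $\ell$. First I would condition on the whole (unmarked) $n$-genealogy. Writing $B_r^n(u)$ for the number of blocks subtending exactly $r$ of the labels $\{1,\dots,n\}$ at time $u$, a mutation that lands on such a block affects precisely those $r$ individuals; hence, conditionally on the genealogy, $M_r^n=M_r^n(\tau_*^n)$ is Poisson with mean $\gamma L_r^n$, where
\[
L_r^n := \int_0^{\tau^n} B_r^n(u)\,du
\]
is the combined length of all size-$r$ branches (for fixed $r<n$ the mutations arriving during $(\tau^n,\tau_*^n]$ sit on the unique size-$n$ lineage and do not contribute). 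Summing over $r$ gives $\sum_r L_r^n=\int_0^{\tau^n}N^n(u)\,du$, whose asymptotics is supplied by Theorem \ref{Tfull limit}: this combined length is $\sim\ell(n)$ while $M^n(\tau^n)\sim\gamma\ell(n)$, in probability. It therefore suffices to (i) find the asymptotics of $L_r^n$ for fixed $r$, and (ii) transfer it to $M_r^n$ through the conditional Poisson structure.

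Step (i) is the heart of the matter. Via (\ref{Evn}) and $\ell(n)=\int_1^n q/\psi(q)\,dq$, the hypothesis $\ell(n)\sim^* n^\beta$ with $\beta\in(0,1)$ is equivalent to the regular variation $\psi(q)\sim^* q^{2-\beta}$ as $q\to\iy$, placing the coalescent in the universality class of the Beta$(2-\beta,\beta)$-coalescent. Because $\int_1^n q/\psi(q)\,dq$ is dominated by the top scales $q\asymp n$, the combined length is accumulated in the many-blocks (small-time) regime, where $N^n(u)\sim v^n(u)$ is large and the partition consists predominantly of singletons. Analysing the formation of blocks there --- when there are $b\approx v^n(u)$ blocks and a reproduction atom $(t,\bx)$ of $\pi$ arrives, the number of (mostly singleton) blocks choosing a common colour $i$ is binomial with parameters $b$ and $x_i$ --- and integrating against $\X'(d\bx)/\sumd$ in the regularly varying regime yields, after the self-similar rescaling, a limit $L_r^n/\ell(n)\to c_r$ with $c_r$ proportional to $\Gamma(r-\beta)/r!$. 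Combined with $M^n\sim\gamma\ell(n)$ from Theorem \ref{Tfull limit}, this identifies $\lim_n M_r^n/\ell(n)$ as the constant $\beta\Gamma(r-\beta)/r!$ of the statement, in probability.

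For step (ii) and for the open count, note that conditionally on the genealogy $M_r^n$ is Poisson with mean $\gamma L_r^n\to\iy$, so once $L_r^n$ is shown to concentrate (a second-moment estimate for $B_r^n$, parallel to the control of $N^n$ behind (\ref{Ebehav N})) one has $M_r^n\sim\gamma L_r^n$ in probability. The open version $M_r^{n;o}$ then follows from Proposition \ref{PN}: since the length is gathered at small times, where by (\ref{Ebehav N}) essentially every branch is still open, the size-$r$ open length agrees with $L_r^n$ to leading order, so $M_r^{n;o}\sim M_r^n$.

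The main obstacle is step (i). The martingale machinery behind Proposition \ref{PN} controls only the block count $N^n$, not the finer size profile $(B_r^n)_{r\ge1}$, so a new ingredient is required: either a direct generator computation of $\E B_r^n(u)$ together with a variance bound, or a comparison argument reducing the regularly varying class $\psi(q)\sim^* q^{2-\beta}$ to the explicitly solved Beta case of \cite{bbs1,bbs2}. A delicate point is that the fixed-$r$ asymptotics must not be summed term by term, since blocks of size comparable to $n$ (which dominate $M^n$ itself) are not described by the limiting weights $c_r$; only fixed $r$ is needed here, but the interchange of the $n\to\iy$ limit with the small-time self-similar analysis, and the uniformity of the ``mostly singletons'' approximation across the dominant time scales, have to be justified with care.
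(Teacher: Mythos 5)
Your reduction to the combined size-$r$ branch length $L_r^n$ is a legitimate strategy (it is essentially the route of \cite{bbs1,bbs2} for Beta-coalescents), but as written it contains two genuine gaps. First, the step you yourself flag as ``the heart of the matter'' --- proving $L_r^n/\ell(n)\to c_r$ with $c_r\propto\Gamma(r-\beta)/r!$ --- is not carried out: the martingale estimates of the paper control only the total block count $N^n$, and nothing in your sketch supplies the required first- and second-moment control of the size profile $(B_r^n)_{r\ge 1}$ uniformly over the dominant time scales. A proof that defers its central computation to ``either a direct generator computation \dots or a comparison argument'' is not a proof. Second, and more seriously, the pivot of your step (i) is false as stated: $\ell(n)=\int_1^n q\,dq/\psi(q)\sim^* n^\beta$ does \emph{not} imply $\psi(q)\sim^* q^{2-\beta}$ (regular variation of an integral does not force regular variation of the integrand; only the converse implication, via Karamata, is valid). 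The corollary's hypothesis is deliberately weaker than $\beta$-regular variation of $\psi$ --- the discussion immediately after the corollary stresses that $\ell(n)$ may be asymptotic to $n^\beta$ even without the $\beta$-regular variation condition --- so any argument that first upgrades the hypothesis to membership in the Beta$(2-\beta,\beta)$ universality class proves a strictly weaker statement.

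The paper's own (sketched) proof avoids the tree geometry entirely. The allelic families form an exchangeable random partition of $\{1,\dots,n\}$, and Theorem \ref{Tfull limit} shows that the total number of families is $\sim\gamma\ell(n)\sim^* n^\beta$ in probability; Theorem 1.2 of Schweinsberg \cite{schweinsberg_sb} (in the spirit of \cite{ghp}) then converts this single fact, by exchangeability alone, into the asymptotics $\beta\Gamma(r-\beta)\ell(n)/r!$ for the number of size-$r$ blocks, the only bookkeeping being the remark that $M_r^n-M_r^n(\tau^n)\in\{0,1\}$ and is nonzero for at most one $r$. No estimate on $B_r^n$ or $L_r^n$ is needed, which is precisely what makes the corollary available under the bare hypothesis $\ell(n)\sim^* n^\beta$. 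If you want to salvage your approach, you would need to either prove the concentration of $L_r^n$ directly (new work beyond the paper's toolkit) or restrict to the regularly varying subclass, in which case you should say so and cite \cite{bbs1,bbs2,bbl2} for the Beta case rather than claim the general statement.
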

\begin{proof}[Sketch of the proof]
Note that both
$M_r^n - M_r^n(\tau^n)\in \{0,1\}$, $M_r^{n;o}- M_r^{n;o}(\tau^n)\in
\{0,1\}$,
where these differences are positive for at most one $r \in \{1,\ldots,n\}$. 
One can combine Theorem \ref{Tfull limit} with Theorem 1.2 in
Schweinsberg \cite{schweinsberg_sb} to conclude the claim (see also \cite{bbl2}).
\end{proof}
A stronger version of this result (in the sense of almost sure convergence)
was obtained recently in \cite{bbl2}, applying the main result of \cite{ghp}, 
for a class of $\La$-coalescent with ``$\beta$-regular variation at $0$'',
that comprises the class of the above mentioned Beta-coalescents.
However, $\ell(n)$  might be asymptotic 
to $n^\beta$ even without the $\beta$-regular variation condition.

More generally, if $\ell(n_k)\sim^* (n_k)^\beta$ for some $\beta\in (0,1)$
along a given subsequence $(n_k)_{k\geq 1}$, we obtain the same
asymptotics for $M_r^{n_k}$ and $M_r^{n_k;o}$ as in Corollary \ref{Cnewad}. In some (rather vague) sense,
for each fixed $r$, $M_r^{n_k}$ and $M_r^{n_k;o}$ should both be close to 
$\frac{\log\ell(n_k)}{\log n_k} \Gamma(r- \frac{\log\ell(n_k)}{\log n_k}) \ell(n_k)/r!$

One can generalize (\ref{Eas verbal})
in a different way: 
given any $(t_n)_{n\geq 1}$ such that $\ell_{t_n}(n) \sim^* n^\beta$,
for $\beta\in (0,1)$,
one can combine 
Theorem \ref{Tpartial limit} again with Theorem 2 in \cite{schweinsberg_sb} to 
conclude that 
both $M_r^n(t_n)$ and $M_r^{n;o}(t_n)$ are asymptotic to $\beta
\Gamma(r-\beta) \ell_{t_n}(n)/r!$, in probability. 
Again, this convergence could hold only along subsequences.

It would seem useful for applications to not only know the first order asymptotics but also the error terms, and
there seems to be no good general approach for obtaining these. 
However, for specific $\X$ (that is $\psi$)
one could get concrete  error bounds by redoing the general calculations (in the next section)
with this given $\psi$.

\bigskip
It is not difficult to see that the 
asymptotic results given here depend only on the behavior of measure $\X$ 
``close to $(0,0,\ldots)$''. More precisely, if $\X_1$ and $\X_2$ are two measures on $\Delta$ satisfying
\begin{equation}
\label{E eq close to zero}
\X_1(d\bx)\, \indica{\sum_i x_i \leq \eps} = \X_2(d\bx)\, \indica{\sum_i x_i \leq \eps}
\end{equation}
for some $\eps >0$, then the asymptotic quantities in Theorems \ref{TMinit}--\ref{Tpartial limit} 
corresponding to $\X_1$ and $\X_2$
will be identical.
This will continue to hold even if (\ref{E eq close to zero}) is true only in the $\sim$ sense, in fact, as long as 
$\psi_{\X_1}(q)\sim\psi_{\X_2}(q)$ as $q\to \iy$.

What might be surprising (even disturbing) is that for any probability measure $\X$ on $\Delta$ there exists a 
probability measure $\La$ on $[0,1]$ such that
\[
\psi_\X(q) = \psi_\La (q), \ \forall q\geq 0,
\]
where $\psi_\La(q)$ is defined as $\int_{[0,1]} (e^{-qx} -1 + qx)\, \La(dx)/x^2 $.
The question of whether the measures $\X$ and $\La$ can be connected in terms of a 
stochastic coupling construction remains open.
To verify the above identity, one can check that $f:=\psi_\X'$ is a completely monotone function, meaning that 
$(-1)^n \frac{d^n}{dq^n} f(q) \geq 0$ on $(0,\iy)$.
Then due to Bernstein's theorem $f$ is a Laplace transform of a positive Borel measure on $[0,\iy)$,
and it can be written in the form
\begin{equation}
\label{Epomoc}
\psi_\X'(q) = f(q) = a+ bq + \int_0^\iy (1-e^{-qx}) \,\mu(dx),
\end{equation}
where $\mu$ is a measure on $[0,\iy)$ such that $\int (1\wedge x) \, \mu(dx) <\iy$, and $\mu(\{0\})=0$.
Differentiating (\ref{D:psi xi}) twice gives
$\psi_\X'(0)=0$ and $\psi_\X''(0)=1$, so it must be $a=0$ and 
\[
1=b+\int_0^\iy x \,\mu(dx).
\]
Moreover, $\psi_\X(q)/q^2 \to \X((0,0,\ldots))/2$, thus (\ref{Epomoc}) implies that $b= \X((0,0,\ldots))$.
Then the above identity gives $\X((0,0,\ldots))+\int_0^\iy x \, \mu(dx)=1$, so $\La(dx):=x\,\mu(dx) + \X((0,0,\ldots)) \delta_0(dx)$ is a probability measure on $[0,\iy)$.
By (\ref{Epomoc}) and the fundamental theorem of integral calculus one now has identity
\[
\psi_\X(q) =  b \frac{q^2}{2}+\int_0^\iy \int_0^q (1- e^{-tx}) \, \mu(dx) = b \frac{q^2}{2}+\int_0^\iy \frac{qx+e^{-qx}-1}{x}\,\mu(dx).
\]
The last quantity is identical to $\psi_\La$, provided that the support of $\La$ (that is, of $\mu$)
is the unit interval.
This last claim can be checked by using the inverse Laplace transform formula (see, e.g., \cite{durrett}, Example 5.4)
together with the definition of $\psi_\X$.

\section{The arguments}
\label{S:martingale}
The proof of Proposition \ref{PN} is based on the martingale technique from \cite{me_xi},
that originated in \cite{bbl1} in the $\La$-coalescent setting.
More precisely, 
Proposition 17 in \cite{me_xi} shows existence of $n_0\in \N$ and $C \in (0,\iy)$, such that
\begin{equation}
E[d\log(N^n(s))\,|\,\FF_s]= \left(-\frac{\psi(N^n(s))}{N^n(s)} + h^n(s)\right) ds,
\label{Eha}
\end{equation}
where $(h^n(s), s\ge z)$ is an $\FF$-adapted process satisfying
$\sup_{s\in [z,z \wedge \tau_{n_0}^n]} |h^n(s)|\le C$, uniformly over $n$, and where
\begin{equation}
\label{EvarN}
E[[d\log(N^n(s))]^2\,|\,\FF_s] \indic{\{s \leq \tau_{n_0}^n\}} \leq C\, ds, \mbox{ almost surely.}
\end{equation}
A crucial new observation is that $N^{n;o}$ behaves analogously.
Indeed, for $\bx \in \Delta$, let
$(X_j,\, j \in \N)$ be a family of i.i.d.~(generalized) random variables 
with law $\P_{\bx}$, where
\[
\P_\bx(X_1=i) = x_i, \ i\in \N, \mbox{ and } \P_\bx(X_1=\iy) = 1- \sum_{i=1}^\iy x_i,
\]
Furthermore, for each $b\in \N$, let the family $(Y_\ell^{(b)},\,i\in \N)$ of random variables
be defined by
\begin{equation}
\label{EYs}
Y_\ell^{(b)}:=\sum_{j=1}^b \indica{X_j=\ell}, \ \ell \in \N.
\end{equation}
Then we have, 
on the event $\{N^{n;o}(s)=b\}$ 
(see the proof of \cite{me_xi}, Proposition 17), 
\begin{eqnarray}
\nonumber
E(d \log(N^{n;o}(s))\,|\,\FF_s)
\bck &=& \bck
\int_{\Delta} E_{\bx}\!\!\left[\log\frac{b-\sum_{\ell=1}^\infty (Y_\ell^{(b)} -\indica{Y_\ell^{(b)} >0})}{b} \right]\!\!\frac1{\sumd}\,\X'(d\bx) \,ds\\
\label{EdlogNo}
\bck &+& \bck
(1-\X'(\Delta)) {b \choose 2} \log\frac{b-1}{b}
+ \log\frac{b-1}{b} \cdot \gamma b.
\end{eqnarray}
The first two terms in the drift above are identical to the only terms in the expression for the infinitesimal drift of 
$\log(N^n)$ (at time $s$ on the event $\{N^n(s)=b\}$), leading to (\ref{Eha}).
The third term comes from the additional loss (that is, closure) of open branches 
at constant rate $\gamma$. 
Since $b \log((b-1)/b) = -1 + o(1)$, one obtains
\begin{equation}
E[d\log(N^{n;o}(s))\,|\,\FF_s]= \left(-\frac{\psi(N^{n;o}(s))}{N^{n;o}(s)} + h_o^n(s)\right) ds,
\label{Eha open}
\end{equation}
for $n_0,C$ from (\ref{Eha}) and $C_o=C+\gamma$, and where
$\sup_{s\in [z,z \wedge \tau_{n_0}^{n;o}]} |h_o^n(s)|\le C_o$, uniformly over $n$.
Moreover, since 
$b \log^2((b-1)/b) = o(1)$, the bound from (\ref{EvarN}) holds with $N^{n;o}$ (resp.~$C_o$)
in place of $N^n$ (resp.~$C$).

Recall the map $v^n$ defined in (\ref{Evn}).
The argument of \cite{me_xi}, Section 4.1, Part I already yields (cf.~display (40) in \cite{me_xi})
\begin{equation}
\label{Esuffices n}
P\left(\sup_{t\in[0,s]} \left|
\log{\frac{N^n(t\wedge \tau_{n_0}^n)}{v^n(t \wedge \tau_{n_0}^n)}} \right|
> 2s^{\alpha}
\right) = O(s^{1-2\alpha}),
\end{equation}
but it clearly carries over to imply
\begin{equation}
\label{Esuffices n open}
P\left(\sup_{t\in[0,s]} \left|
\log{\frac{N^{n;o}(t\wedge \tau_{n_0}^{n;o})}{v^n(t \wedge \tau_{n_0}^{n;o})}} \right|
> 2s^{\alpha}
\right) = O(s^{1-2\alpha}).
\end{equation}
Since for $\eps \in (0,1]$ we have
$|\log(x)|\leq \eps$ iff $1-x\in [0,1-e^{-\eps}]$ implying $1-x \in [0,\eps]$, 
or $x-1\in [0,e^{\eps}-1]$ implying $x-1 \in [0,2\eps]$, 
one obtains (\ref{Ebehav N})
by combining the estimates in (\ref{Esuffices n}) and (\ref{Esuffices n open}), and noting that 
$\tau_{n_0}^{n;o}\leq \tau_{n_0}^{n}$, $\forall n, n_0$ in the full genealogy coupling.
Alternatively, the same could be concluded by applying the argument leading to 
(\ref{Esuffices n})--(\ref{Esuffices n open}) in terms of the process
$\log(N^n/N^{n;o})$, that is, by comparing directly $N^n$ and $N^{n;o}$.

\begin{remark}
\label{Rrobust}
It is important to note that if the mutation rate per unit length were not constant but given 
instead as a non-negative $\FF$-adapted stochastic process
$(\gamma_t, t\geq 0)$ such that $\P(\sup_{t\geq 0} 	\gamma_t \leq \gamma)=1$ for some $\gamma<\iy$, then 
(\ref{EdlogNo}) would become
\begin{eqnarray}
E(d \log(N^{n;o}(s))\,|\,\FF_s)
\nonumber
\bck &=& \bck
\int_{\Delta} E_{\bx}\!\!\left[\log\frac{b-\sum_{\ell=1}^\infty (Y_\ell^{(b)} -\indica{Y_\ell^{(b)} >0})}{b} \right]\!\!\frac1{\sumd}\,\X'(d\bx) \,ds\\
\label{EdlogNor}
\bck &+& \bck
(1-\X'(\Delta)) {b \choose 2} \log\frac{b-1}{b}
+ \log\frac{b-1}{b} \cdot \gamma_s \cdot b.
\end{eqnarray}
Since $\log\frac{b-1}{b} \cdot \gamma_s \cdot b \leq \gamma$, the rest of the proof of Proposition \ref{PN} would remain identical. Moreover, it is simple to see that the 
forthcoming arguments would easily carry over to yield appropriate analogues of 
Theorems \ref{TMinit}, \ref{Tfull limit} and \ref{Tpartial limit} in this general setting, 
where 
$\gamma\int N^n(u)\,du$ is being replaced by $\int \gamma_u N^n(u)\,du$
and
$\gamma \cdot \ell(n)$ by $\ell(n;\gamma):=\int_0^1 \E(\gamma_u)v^n(u)\,du$, under additional hypotheses on the process
$\gamma$ that would guarantee $\int_0^1 \gamma_u v^n(u)\,du \sim \ell(n;\gamma)$, as well as the divergence of the sequence $(\ell(n;\gamma))_{n\geq1}$.
For example, for estimate (\ref{Epoidomi}) one would use the stochastic domination with the 
Poisson (mean $\gamma \int_0^t N^{n;c}(u)\,du$) law, instead of the law itself.

As mentioned in the Introduction, Proposition \ref{PN} is just an example of a 
fairly general result that could be stated as follows: 
if the full genealogy dynamics is enriched so that in the new process the
branches are called either ``open'' or ``closed'', and the open branch count process satisfies (\ref{EdlogNor}), 
then (\ref{Ebehav N}) holds.
Two recent specific examples from this class of results are \cite{foucart} (verifying a weaker coming down from infinity criterion for a $\La$-coalescent with migration model) and \cite{parsal} (deriving the speed of coming down from infinity for the Kingman coalescent with constant recombination rate).
The proof of this ``meta-theorem'' is essentially given in the previous paragraph.
\hfill $\Diamond$
\end{remark}

{\em Proof of Theorem \ref{TMinit}.}
Due to the construction of the full marked genealogy coupling, we know that, 
given the path of the processes $N^n$ up to time $t$, 
$M^n(t)$
is a Poisson (mean $\gamma \int_0^t N^n(u)\,du$) random variable, that can be obtained as a sum of 
$M^{n;o}(t)$ and  $M^{n;c}(t)$.
Moreover, if $N^{n;o}$ (that is $N^{n;c}=N^n-N^{n;o}$) is given in addition, 
then 
$M^{n;c}(t)$ is a Poisson (mean $\gamma \int_0^t N^{n;c}(u)\,du$) random variable, conditionally 
independent of $M^
{n;o}(t)$.
The last observation is true if the fixed time $t$ above is replaced by the random time $t \wedge \tau_{n_0}^{n;o}$, measurable 
with respect to $\sigma\{N^n(u),N^{n,o}(u),\, u\leq t\}$.
Note that $M^{n;o}(t)$ cannot have (conditional) Poisson distribution, since $\P(M^{n;o}(t)\leq n)=1$.
Note in addition that $M^n(t \wedge \tau_{n_0}^n)$
is a Poisson (mean $\gamma \int_0^{t \wedge \tau_{n_0}^n} N^n(u)\,du$) random variable, given 
$\FF_{\tau_{n_o}^n}$.
We cannot say the same if $\tau_{n_0}^n$ is replaced here by $\tau_{n_0}^{n;o}$, since
knowing $N^n(\tau_{n_0}^{n;o})>n_0$ (that is, at least one branch is closed at time $\tau_{n_0}^{n;o}$)
excludes the event $\{M^n(\tau_{n_0}^{n;o})=0\}$ that no mutation arrived prior to time $\tau_{n_0}^{n;o}$. Due to this, some additional technical steps are needed below (notably, in arguing (\ref{Etechnia})--(\ref{Etechnib})).

Due to Proposition \ref{PN}, on the event $A_t= \{N^{n;c}(u)\leq 8t^\alpha N^n(u),\forall u\in [0,t \wedge \tau_{n_0}^{n;o}]\}= 
\{N^{n;o}(u)\geq (1-8t^\alpha) N^n(u),\forall u\in [0,t\wedge \tau_{n_0}^{n;o}]\}$ of probability greater than
$1-O(t^{1-2\alpha})$ we have
\[
\int_0^{t\wedge \tau_{n_0}^{n;o}} N^{n;c}(u)\,du \leq 8t^\alpha \cdot \int_0^{t\wedge \tau_{n_0}^{n;o}} N^n(u)\,du.
\]
On the complement of $A_t$ we have $\int_0^{t\wedge \tau_{n_0}^{n;o}}
N^{n;c}(u)\,du \leq \int_0^{t\wedge \tau_{n_0}^{n;o}} N^n(u)\,du$, since
$N^{n;c} \leq N^n$ as a consequence of the full marked genealogy coupling.
Hence we compute
\begin{eqnarray}
\nonumber
E\left[ \frac{M^{n;c}(t\wedge \tau_{n_0}^{n;o})}{\int_0^{t\wedge \tau_{n_0}^{n;o}} N^n(u)\,du}\right] \bck&=&\bck 
E \left[E\left(\left. \frac{M^{n;c}(t \wedge \tau_{n_0}^{n;o})}{\int_0^{t\wedge \tau_{n_0}^{n;o}} N^n(u)\,du } \,\right|\, ((N^n(u),N^{n;o}(u)),\,u\in [0,t \wedge \tau_{n_0}^{n;o}]) \right) \right]\\
\label{Epoidomi}
\bck&=&\bck E\left[ \frac{\gamma \int_0^{t\wedge \tau_{n_0}^{n;o}} N^{n;c}(u)\,du}{\int_0^{t\wedge \tau_{n_0}^{n;o}} N^n(u)\,du} \right]\\
\nonumber
\bck&\leq&\bck \gamma \left( 8t^\alpha \P(A_t) + \P(A_t^c)\right),
\end{eqnarray}
and the estimate (\ref{EMinit one}) readily follows.

In order to obtain (\ref{EMinit two}), we first note that
\begin{eqnarray*}
\Bigg\{ \!M^n(t \wedge \tau_{n_0}^{n;o})  < \frac{\gamma}{4} \!\!\! & &\!\!\!\!\!\!\left. \int_0^{t\wedge \tau_{n_0}^{n;o}} N^n(u)\,du \right\} \subset 
\left\{.\!M^n(t \wedge \tau_{n_0}^n) < \frac{\gamma}{2} \int_0^{t\wedge \tau_{n_0}^n}\!\! N^n(u)\,du \right\} \bigcup \\
& & \left\{ M^n(t \wedge \tau_{n_0}^n) - M^n(t \wedge \tau_{n_0}^{n;o}) > \frac{\gamma}{4} \int_0^{t\wedge \tau_{n_0}^{n;o}}\!\! N^n(u)\,du  \right\}.
\end{eqnarray*}
The estimate (\ref{EMinit two}) will now simply follow
from (\ref{EMinit one}), the Markov inequality, and
\begin{eqnarray}
\label{Etechnia}
& &\limsup_n \P\left( M^n(t \wedge \tau_{n_0}^n) < \frac{\gamma}{2} \int_0^{t\wedge \tau_{n_0}^n}\!\! N^n(u)\,du \right)=0,\\
\label{Etechnib}
& &\limsup_n \P\left( M^n(t \wedge \tau_{n_0}^n) - M^n(t \wedge \tau_{n_0}^{n;o}) > \frac{\gamma}{4} \int_0^{t\wedge \tau_{n_0}^{n;o}}\!\! N^n(u)\,du \right)=0.
\end{eqnarray}
To obtain (\ref{Etechnia}), we first note that
\[
\P\left(\!\left.\!M^n(t \wedge \tau_{n_0}^n) < \frac{\gamma}{2} \int_0^{t\wedge \tau_{n_0}^n}\!\! N^n(u)\,du \,\right| (N^n(u),\,u \leq t \wedge \tau_{n_0}^n) \!\right)
\leq E(e^{-c \int_0^{t \wedge \tau_{n_0}^n} N^n(u)\,du }),
\]
for some $c>0$, uniformly over $n$ (and $\gamma$), and also
that $(\int_0^{t\wedge \tau_{n_0}^n} N^n(u)\,du)_{n\geq 1}$ diverges, almost surely, as $n\to \iy$.
Indeed, this sequence of random variables
is monotone increasing in $n$, so it must be
converging, almost surely, to a possibly generalized (i.e., taking value $\iy$) random variable.
Moreover, the divergence with probability $1$ is clear if the full genealogy is not locally finite.
Otherwise, 
note that $(\int_0^t N^n(u)\,du)_{n\geq 1}$ diverges while $(\int_{t\wedge \tau_{n_0}^n}^t N^n(u)\,du)_{n\geq 1}$
converges, almost surely, as $n\to \iy$.
The latter statement is clear due to (\ref{Etau fin}). 
The former follows from the fact
that $\lim_n v^n(t)=v(t)<\iy$, the inequality (\ref{Epsi ineq}), the asymptotics
(\ref{Esuffices n}), and 
\begin{equation}
\label{Eit diver}
\int_0^t v^n(u)\,du = \int_{v^n(t)}^n \frac{q}{\psi(q)}\,dq \geq \int_{v^n(t)}^n \frac{2}{q}\,dq \approx 
\int_{v(t)}^\iy \frac{2}{q}\,dq.
\end{equation}
The first identity above, as noted in \cite{bbl2}, is due to the change of variables $q= v^n(u)$, $dq= - \psi(v^n(u)) \,du$.

To obtain (\ref{Etechnib}), note that
$M^n(t \wedge \tau_{n_0}^n) - M^n(t \wedge \tau_{n_0}^{n;o})$ is a Poisson (mean $\gamma \int_{t\wedge \tau_{n_0}^{n;o}}^{t\wedge \tau_{n_0}^n} N^n(u)\,du$) random variable given $\FF_{\tau_{n_0}^n}$. 
Moreover, in the setting of locally finite full genealogy, due to (\ref{Etau o fin}), the sequence $(\int_{ t \wedge \tau_{n_0}^{n;o}}^{t\wedge \tau_{n_0}^n} N^n(u)\,du)_{n\geq 1}$ is convergent (tight), while $(\int_0^{t\wedge \tau_{n_0}^{n;o}}\!\! N^n(u)\,du)_{n\geq 1}$ is always divergent, implying (\ref{Etechnib}).
Otherwise, both $\tau_{n_0}^{n;o}$ and $\tau_{n_0}^n$ diverge to $\iy$ as $n\to \iy$, almost surely, so 
$M^n(t \wedge \tau_{n_0}^n) - M^n(t \wedge \tau_{n_0}^{n;o})=0$, except on an event of negligible probability, again implying (\ref{Etechnib}).

{\em Proof of Theorem \ref{Tfull limit}.}
The same change of variables as in (\ref{Eit diver}) gives
\[
\int_0^{(v^n)^{-1} (1)} v^n(u) \,du = \int_1^n \frac{q}{\psi(q)}\,dq = \ell(n),
\]
where 
\[
(v^n)^{-1} (1) = \int_{1}^n \frac{dq}{\psi(q)}.
\]
The hypotheses of locally finite full genealogy and regularity imply 
(see \cite{schweinsberg_xi}, Proposition 33 or \cite{me_xi}, Theorem 1)
that  $\int_a^\iy dq/\psi(q) < \infty$ 
(for any $a>0$), and therefore that $(v^n)^{-1} (1)$ increases towards a finite limit $1^*$.
As a consequence, $\ell(n)$ diverges as $n\to \iy$ (see (\ref{Eit diver})).
\begin{remark}
Note that $\ell(n) \sim \int_0^a v^n(u)\,du$ for any fixed $a>0$.
\hfill $\Diamond$
\end{remark}
It suffices to prove the convergence in probability for 
$M^n(\tau^n)/ (\gamma \ell(n))$
and 
$M^{n;o}(\tau^n)/ (\gamma \ell(n))$.
We discuss the convergence of the former sequence in some detail below, and give at present the reasoning 
for the latter convergence assuming the former: 
since $M^n(\tau^n)= M^{n;o}(\tau^n) + M^{n;c}(\tau^n)$, it suffices to show that 
$M^{n;c}(\tau^n) = o(\ell(n))$, in probability.
This will follow by (\ref{EMinit two}), provided $M^{n;c}(\tau^n)- M^{n;c}(\tau_{n_0}^{n;o} \wedge t) = o(\ell(n))$
for some fixed $t$.
However, $M^{n;c}(\tau^n)- M^{n;c}(\tau_{n_0}^{n;o} \wedge t) \leq
M^n(\tau^n)- M^n(\tau_{n_0}^{n;o} \wedge t)$ in the full (marked) genealogy 
coupling, and $M^n(\tau^n)- M^n(\tau_{n_0}^{n;o} \wedge t)$ is (as in the proof of Proposition \ref{PN}) a conditional Poisson random variable with 
mean
$\gamma \cdot \int_{\tau_{n_0}^{n;o} \wedge t}^{\tau^n} N^n(u)\,du$.
So the above claim follows due to convergence (tightness)
of $\int_{\tau_{n_0}^{n;o} \wedge t}^{\tau^n} N^n(u)\,du$ and divergence of $\ell(n)$.

The rest of the argument is analogous to that for Theorem 5 in \cite{bbl1}.
The bulk of it is showing that $\int_0^{\tau^n} N^n(u)\, du \sim \ell(n)$, in probability, as $n\to \iy$. 
Since, given $N^n$, $M^n(\tau^n)$ is a Poisson random variable with mean 
$\int_0^{\tau^n} N^n(u)\,du$, and since $\ell(n)$ diverges with $n$, one concludes (as a special case of LLN)
that $M^n(\tau^n)\sim \gamma \ell(n)$, in probability.
It therefore suffices to show that for any subsequence $n_k$ there exists a further subsequence $n_{k_j}$ such that
$\int_0^{\tau^{n_{k_j}}} N^{n_{k_j}}(u)\,du/\ell(n_{k_j}) \to 1$, almost surely.

To simplify the notation, we rename the subsequence $(n_k)_{k\geq 1}$ as $(k)_{k\geq 1}$.
Recall (\ref{Esuffices n}), and for a fixed $\alpha \in (0,1/2)$ choose a decreasing sequence 
$(s_k)_{k\geq 1}$ of positive numbers, such that
\[
\sum_k s_k^{1-2\alpha} < \iy.
\]
Then we can conclude from (\ref{Etau fin}), (\ref{Esuffices n}) and the Borel-Cantelli lemma that
\begin{equation}
\label{Eunif conv as}
\lim_{k\to \iy} \sup_{t\in[0,s_k]} \left|
\frac{N^{k}(t)}{v^k(t)} -1 \right|
=0, \mbox{ almost surely.}
\end{equation}
Due to the assumption of locally finite full genealogy, 
we can now choose a subsequence $k_j$ such that $\int_0^{s_j} v^{k_j}(u)\,du$ diverges as $j\to \iy$, and that also 
both 
\begin{equation}
\label{Esmalltointertime}
\int_{s_j}^{1^*} v^{k_j}(u)\,du \,\,/ \int_0^{s_j} v^{k_j}(u)\,du \mbox{ and }
\int_{s_j}^{\tau^{k_j}} N^{k_j}(u)\,du \,\, / \int_{0}^{s_j} N^{k_j}(u)\,du
\end{equation}
tend to $0$ as $j\to \iy$ (for the second sequence, the limit is taken almost surely).
Joint with
(\ref{Eunif conv as}), this ensures that
\[
\lim_{j\to \iy} \frac{\int_0^{\tau^{k_j}} N^{k_j}(u)\, du}{\int_0^{1^*} v^{k_j}(u)\, du} =
\lim_{j\to \iy} \frac{\int_0^{s_j} N^{k_j}(u)\, du}{\int_0^{s_j} v^{k_j}(u)\, du} = 1,
\mbox{ almost surely.}
\]

\begin{remark}
\label{RBolSzn}
If $\X(d\bx)= \indica{\bx=(x,0,0,\ldots)}$ for $x\in (0,1)$, or equivalently, in case of the Bolthausen-Sznitman coalescent
we have
\[
\psi(q) = q\log{q} + O(q), \mbox{ as } q \to \infty,
\]
with $O(q)\geq 0$, for all $q>0$.
Therefore
\[
v^n(t) \in [n^{e^{-t(1+o(1))}},n^{e^{-t}}] \mbox{ as well as } \ell(n) \sim \frac{n}{\log{n}}, \mbox{ as } n \to \infty.
\]
We conclude that $v^n(t)$ is of order $1$ at times of order $\log\log(n)$ and in turn that
\begin{equation}
\label{ERoneimpo}
\int_1^{\log\log n }v^n(t)\,dt = o(\ell(n)), \mbox{ as } n \to \iy.
\end{equation}
In order to show that $\int_0^{\tau^n} N^n(u)\,du \sim \ell(n)$, it therefore suffices to start as in the paragraph which comprises (\ref{Eunif conv as})--(\ref{Esmalltointertime}), ensuring the following analogue of (\ref{Esmalltointertime})
\[
\int_{s_j}^{1} v^{k_j}(u)\,du \,\,/ \int_0^{s_j} v^{k_j}(u)\,du \to 0 \mbox{ and }
\int_{s_j}^{1} N^{k_j}(u)\,du \,\, / \int_{0}^{s_j} N^{k_j}(u)\,du \to 0,
\]
and to verify in addition that (possibly along a further subsequence)
\begin{equation}
\label{ERtwoimpo}
\int_{1\wedge \tau^{k_j}}^{\tau^{k_j}}N^{k_j}(t)\,dt = o(\ell(k_j)), \mbox{ as } j \to \iy.
\end{equation}
This can all be done due to (\ref{Esuffices n}), the fact that $v^n(t)$ is bounded by a power of $n$ smaller than $1$ for each fixed $t$, asymptotically in $n$, and finally the estimate 
\[
E\tau^n = O(\log\log{n}),
\]
which can be obtained via optional stopping of the martingale $\bar{M}_t:= \int_{N^n(t)}^n \frac{dq}{\bar{\psi}(q)} -t$, $t\geq 0$, where $\bar{\psi}(q):= \int_{[0,1]} ((1-x)^q - 1 + qx)/x^2=\psi(q) + O(1)$ 
(see \cite{mahabi} for further use of $\bar{M}$ and its generalizations).
\hfill $\Diamond$
\end{remark}

{\em Proof of Theorem \ref{Tpartial limit}.}
Due to the assumption that $\ell_{t_n}(n)$ diverges, one can argue as in the proof of Theorem \ref{Tfull limit}
that it suffices to show that
\begin{equation}
\label{Esuffices show}
\frac{M^n(t_n)}{\gamma\cdot \ell_{t_n}(n)}\to 1, \mbox{ in probability}.
\end{equation}
The argument for (\ref{Esuffices show}) is analogous to the last one.
In fact, with the same choice of the sequence $(s_k)_{k\geq 1}$ as above, one now chooses a subsequence $k_j$ so that
$\int_0^{s_j \wedge t_j} v^{k_j}(u)\,du$ diverges as $j\to \iy$, and in addition 
both
\[
\int_{s_j}^{t_j} v^{k_j}(u)\,du \, \indica{t_j>s_j} \,\,/ \int_0^{s_j} v^{k_j}(u)\,du \mbox{ and }
\int_{sj}^{t_j} N^{k_j}(u)\,du \, \indica{t_j>s_j}  \,\, / \int_{0}^{s_j} N^{k_j}(u)\,du
\]
tend to $0$ as $j\to \iy$.
Joint with
(\ref{Eunif conv as}) this ensures that
\[
\lim_{j\to \iy} \frac{\int_0^{t_j} N^{k_j}(u)\, du}{\int_0^{t_j} v^{k_j}(u)\, du} =
\lim_{j\to \iy} 
 \frac{\int_0^{t_j\wedge s_j} N^{k_j}(u)\, du}{\int_0^{t_j \wedge s_j} v^{k_j}(u)\, du} = 1,
\]
almost surely.
Finally, (\ref{Esuffices show}) follows by another application of LLN. 

\begin{remark}
\label{R:ending}
The arguments of \cite{bbl1}, Section 4.3 apply in the regular setting. 
Indeed, instead of
\cite{bbl1}, Lemma 20 one now has the following statement:
 There exists $n_0\in \N$ and $K_0<\infty$ such
that for all $b\geq n_0$, $\bx \in \Delta \cap \{\bx:\sum_{i=1}^\iy x_i \leq 1/4\}$, $c>0$, and 
$Y_\ell^{(b)}$ given by (\ref{EYs}) we have
\begin{equation}
\label{Eboundica}
E_\bx[ \exp\{c[\log(b- \sum_{\ell=1}^\iy (Y_\ell^{(b)} - \indica{Y_\ell^{(b)} >0}))  - \log{b}]^2\} -1] \leq e^{9c/4}
K_0 [(\sum_i x_i^2) + (\sum_i x_i)^2].
\end{equation}
Since $\sum_i x_i^2 \leq (\sum_i x_i)^2$, the RHS above could be simply bounded by $2K_0 (\sum_i x_i)^2$, however, in doing so one might lose some information of the impact of regularity (or irregularity).
Due to (\ref{Eboundica}),
in the definition of the process $E^{(c)}$ and the related calculations 
leading to (33)--(34) in \cite{bbl1},
the constant $K_0$ should be replaced by 
\[
\bar{K_0} = K_0\left(1+ \int (\sum_{i=1}^\iy x_i)^2\frac{\Xi(d\bx)}{\sumd}\right).
\]
In particular, one can conclude that
for any $s>0$ and $d\geq 1$ 
\begin{equation}
\label{E all moments}
\sup_{n \geq 1} \E \left(\sup_{t\in [0,s]} \left|\frac{N^{n}(t)}{v^n(t)}\right|^d\right) = D(d,s) <\iy, 
\end{equation} 
and moreover that $\lim_{s\to 0} D(d,s)=0$.
As indicated earlier, we then have $N^n(t)/v^n(t) \to 1$, 
as $t\to 0$ in $L^d$, for each $d\geq 1$.

As a consequence of the previous observations, and arguments very similar to those for Theorem \ref{TMinit},
it is not difficult to check that, for each fixed $t>0$,
\begin{equation}
\label{EL1limit M}
\frac{E(M^n(t))}{\gamma \cdot \ell_t(n)} = \frac{E(\int_0^t N^n(u)\,du)}{\int_0^t v^n(u)\,du} \to 1,
\end{equation}
as well as 
\[
\frac{E(M^{n;c}(t))}{\gamma \cdot \ell_t(n)} = \frac{E(\int_0^t N^{n;c}(u)\,du)}{\int_0^t v^n(u)\,du} \to 0,
\]
implying (\ref{EL1limit M}) with $M^{n;o}$ in place of $M^n$.
Under the assumption of locally finite genealogies,
$\ell_1(n) \sim \ell(n)$ and 
\[
E[M^n(\tau_1^n)-M^n(1)] = E\left[\gamma \!\int_1^{\tau_1^n}\! N^n(u)\,du\right]\leq  \gamma E[N^n(1)\cdot (\tau_1^n-1) ]\leq C,
\ \forall n\geq 1,
\]
where the final uniform estimate is due to $\sup_n E(N^n(1))<\iy$, and
$\sup_n E(\tau_1^n-1|N^n(1))\leq E(\sup_n \tau_1^n) <\iy$.
Hence, for a locally finite $\X$-genealogy, both
$M^n(\tau_1^n))/(\gamma \ell(n))$ and $M^{n;o}(\tau_1^n)/( \gamma \ell(n))$ converge to $1$ in the mean.
\hfill $\Diamond$
\end{remark}

{\bf Acknowledgments.} The author wishes to thank Matthias Birkner for a pointer to Bernstein's theorem, and to Julien and Nathana{\"e}l Berestycki for rewarding discussions. She is also grateful to the anonymous referee for several useful comments and pointers to the literature.

\end{document}